\documentclass[a4paper,11pt]{article}
\usepackage{stmaryrd}
\usepackage{amsfonts}
\usepackage{bbm}
\usepackage{amscd}
\usepackage{mathrsfs}
\usepackage{latexsym,amssymb,amsmath,amscd,amscd,amsthm,amsxtra,xypic}
\usepackage[dvips]{graphicx}
\usepackage[utf8]{inputenc}
\usepackage[T1]{fontenc}
\usepackage{enumerate}
\usepackage{lmodern}
\usepackage{amssymb}
\usepackage[all]{xy}
\usepackage{ifpdf}
\ifpdf
\usepackage[colorlinks=true,linkcolor=blue,final,backref=page,hyperindex,citecolor=red]{hyperref}
\else
\usepackage[colorlinks,final,backref=page,hyperindex,hypertex]{hyperref}
\fi

\usepackage{nicefrac,mathtools,enumitem}
\usepackage{microtype}
\usepackage{amsfonts}
\usepackage{amssymb}
\usepackage{mathrsfs}
\usepackage{amsmath}
\usepackage{amsthm}
\usepackage{enumerate}
\usepackage{indentfirst}
\usepackage{amsfonts}
\usepackage{amssymb}
\usepackage{mathrsfs}
\usepackage{amsmath}
\usepackage{amsthm}
\usepackage{enumerate}
\usepackage{cite}
\usepackage{mathrsfs}
\usepackage{geometry}
\allowdisplaybreaks
\newtheorem{defn}{Definition}[section]
\newtheorem{thm}[defn]{Theorem}
\newtheorem{lem}[defn]{Lemma}
\newtheorem{prop}[defn]{Proposition}
\newtheorem{cor}[defn]{Corollary}
\newtheorem{ex}[defn]{Example}
\newtheorem{re}[defn]{Remark}

\def\c{\cdot}

\numberwithin{equation}{section}

\newcommand{\emptycomment}[1]{}

\begin{document}

\title{\sf $D$-bialgebras, dendrification  and embeddings into 
{\sf AWB} of almost Poisson algebras}
\author{  \bf Sami Mabrouk\footnote {  E-mail: sami.mabrouk@fsgf.u-gafsa.tn, mabrouksami00@yahoo.fr} }
\date{{ Faculty of Sciences, University of Gafsa,    Tunisia}}
 
\maketitle


\abstract{An algebra with bracket ({\sf AWB} for short) is an associative algebra endowed with a bilinear bracket satisfying a Leibniz-type compatibility condition, as introduced in \cite{casas}. It can be viewed as a noncommutative generalization of an almost Poisson algebra; indeed, when the associative product is commutative and the bracket is skew-symmetric, one recovers the notion of an almost Poisson algebra.
In this paper, we introduce the notion of  {almost Poisson Drinfel'd bialgebras ($D$-bialgebras)} as an analogue of Poisson $D$-bialgebras, and we establish the equivalence between matched pairs, Manin triples, and almost Poisson $D$-bialgebras. Furthermore, we define a new algebraic structure, called  {almost tridendriform  Poisson algebras}, which can be regarded as the underlying algebraic structures associated with relative Rota-Baxter operators on almost Poisson algebras.
Finally, we show that every almost Poisson algebra can be embedded into an algebra with bracket ({\sf AWB}) via averaging operators, and more generally via relative averaging operators associated to a given representation of the almost Poisson algebra.} 

\vspace{0.3cm}
\noindent {\bf Keywords  and phrases:} associative algebra,   almost Poisson $D$-bialgebra, almost  tridendriform Poisson algebra, representation, matched pair, Manin triple, relative Rota-Baxter operator, algebra with bracket. 

\vspace{0.3cm}
\noindent {\bf MSC2020:} 17A30, 16D10, 	17B63, 17B38

\tableofcontents
\section{Introduction}
This paper deals with associative algebras $A$ endowed with a bilinear operation 
$[\cdot,\cdot]: A \otimes A \to A$ which is a left biderivation (respectively, a right biderivation), that is, for all $x,y,z \in A$,
\begin{align}
[xy,z] &= x[y,z] + [x,z]y, \label{Eq1}\\
(\text{resp. } \ [x,yz] &= y[x,z] + [x,y]z). \label{Eq2}
\end{align}

Such an algebraic structure is called an \emph{algebra with bracket} ({\sf AWB}), introduced in \cite{casas}. Its origins can be traced back to the physics literature (see \cite{Kanatchikov}). The prototypical example of this structure is given by Poisson algebras. Further examples, as well as homological properties of {\sf AWB}s, can be found in \cite{Casas1, Casas2, Casas3, Casas4}.

The aim of this paper is to study a particular class of {\sf AWB}s in which the associative product is commutative and the bracket $[\cdot,\cdot]$ is skew-symmetric. In this case, it is sufficient to impose only one of the identities \eqref{Eq1} or \eqref{Eq2}. 
This class of algebras is known under several different names. To the best of our knowledge, it was introduced independently by Cannas da Silva and Weinstein in \cite{CannasWeinstein} under the name \emph{almost Poisson algebras}, and by Shestakov in \cite{Shestakov} under the name \emph{general Poisson algebras} (later renamed \emph{generic Poisson algebras} in \cite{KolesnikovMakarLimanov}). A fundamental example is given by the algebra of smooth functions on a manifold $M$, endowed with the usual product and the bracket defined by
$
[f,g] = \Lambda(df,dg),
$
where $\Lambda$ is a bivector field on $M$. The term ``almost'' emphasizes that such algebras are not necessarily Poisson algebras.

The study of operators on algebraic structures is a classical and well-established topic in algebra. Among the most fundamental examples are endomorphisms and automorphisms, which are typically defined on a generating set of an algebra and then extended to the whole structure. In contrast, other classes of operators exhibit a more intricate construction. 

Rota--Baxter operators were first introduced in the seminal work of Baxter \cite{Bax} (1960) in the context of fluctuation theory. Since then, their theory has been extensively developed across various areas of mathematics. Given an algebra $A$, a linear map $\mathcal{R}: A \to A$ is called a Rota--Baxter operator of weight $\lambda$ if, for all $x,y \in A$,
\begin{equation}
\mathcal{R}(x)\mathcal{R}(y)
= \mathcal{R}\big(x\mathcal{R}(y)\big)
+ \mathcal{R}\big(\mathcal{R}(x)y\big)
+ \lambda \mathcal{R}(xy).
\end{equation}
These operators are important due to their deep connections with several key structures, including the Yang--Baxter equation \cite{MR0674005, MR0725413, MR2568415}, dendriform algebras \cite{MR3021790}, and the renormalization of quantum field theory \cite{CK}. A comprehensive overview of their history and connections can be found in \cite{G}.

Other important classes of operators include Reynolds operators and averaging operators, which have appeared independently in different areas of mathematics. Let $E$ be a locally compact Hausdorff space and denote by $C_0(E)$ the Banach algebra of continuous functions on $E$ vanishing at infinity. A Reynolds operator $\mathcal{K}: C_0(E) \to C_0(E)$ (see \cite{R}) is a linear map satisfying the Reynolds identity
\[
\mathcal{K}(x)\mathcal{K}(y)
= \mathcal{K}\big(\mathcal{K}(x)y + x\mathcal{K}(y) - \mathcal{K}(x)\mathcal{K}(y)\big),
\qquad \forall x,y \in C_0(E).
\]
In particular, any projection $\mathcal{K}$ is a Reynolds operator if it is averaging, that is,
\[
\mathcal{K}(x)\mathcal{K}(y)
= \mathcal{K}\big(\mathcal{K}(x)y\big),
\qquad \forall x,y \in C_0(E).
\]
Reynolds operators were originally introduced in turbulence theory. Averaging operators have been widely studied by Birkhoff \cite{Bir1, Bir2}, Kelley \cite{K}, and Rota \cite{R1, R2}; see also \cite{Safa, HP, FM, KKL, PB, BCEM, S} and the references therein.

In this work, we develop the representation theory of almost Poisson algebras in order to introduce a generalization of weighted relative Rota-Baxter operators ($\mathcal{O}$-operators) on almost Poisson algebras. These operators provide systematic methods for constructing new algebraic structures, such as almost tridendriform  Poisson algebras. The notion of $\mathcal{O}$-operators is a fundamental tool in the splitting of algebraic structures (see \cite{Abdaoui, Chtioui1, Chtioui2, Kupershmidt, Sami, Fatma, Rota} for more details).

Moreover, we introduce and study relative averaging operators (also called embedding tensors) on almost Poisson algebras with respect to a given representation, as a natural generalization of averaging operators. We emphasize their close relationship with {\sf AWB} structures, which is analogous to the connection between Poisson algebras and noncommutative Leibniz--Poisson algebras (see \cite{Casas2}), as discussed in \cite{SafaFatma}.

The following diagram illustrates the relationships between Poisson algebras, almost Poisson algebras, {\sf AWB} structures, and noncommutative Leibniz--Poisson algebras:
\begin{equation*} 
\xymatrix{
\text{Poisson alg.} \ar[d]^{\text{Generalization}}
\ar[rr]^{\text{Duplication\qquad\qquad}} & 
& \text{Noncomm. Leibniz--Poisson alg.} \ar[d]_{\text{Generalization}}  \\
\text{Almost Poisson alg.} \ar[rr]^{\text{Duplication}}  
&   & \text{\sf AWB}   \\
\text{Comm. associative alg.} \ar[u] \ar@/^4pc/[uu]^{\text{Generalization}} 
\ar[rr]^{\text{Duplication}} 
&    &
\text{Associative alg.} \ar[u] \ar@/_4pc/[uu]_{\text{Generalization}}   
}
\end{equation*}

A bialgebra structure consists of a comultiplication together with compatibility conditions between the multiplication and the comultiplication. For instance, if $V$ is a finite-dimensional vector space endowed with a given algebraic structure, one approach is to equip the dual space $V^*$ with a compatible structure and impose suitable compatibility conditions between $V$ and $V^*$. 

A major motivation for studying Lie bialgebras is the existence of a coboundary theory, which allows one to construct Lie bialgebras from solutions of the classical Yang--Baxter equation. Originally arising in physics, the Yang--Baxter equation plays a central role in this theory. Lie bialgebras were introduced by Drinfeld \cite{drinfeld} and have important applications in symplectic geometry and quantum groups; they are also equivalent to Manin triples. It is well known that Lie bialgebras and pre-Lie bialgebras arise from the study of antisymmetric and symmetric bilinear forms on Lie algebras, respectively.

The notion of infinitesimal $D$-bialgebra was introduced in \cite{Aguiar2} as an analogue of Manin triples for Lie algebras. In this paper, we investigate a Drinfeld-type construction in order to introduce almost Poisson Drinfeld bialgebras (or $D$-bialgebras).

~~

 \textbf{Organization.}
The paper is organized as follows. In Section~\ref{Section2}, we recall the notion of matched pairs of almost Poisson algebras. Section~\ref{Section3} is devoted to the theory of $D$-bialgebras for almost Poisson algebras, developed via the frameworks of matched pairs and Manin triples. These structures are formulated in terms of commutative infinitesimal $D$-bialgebras together with suitable compatibility conditions.
In Section~\ref{Section4}, we introduce weighted relative Rota-Baxter operators on almost Poisson algebras and on almost tridendriform Poisson algebras, thereby extending the notion of post-Poisson algebras. More precisely, a weighted relative Rota-Baxter operator on an almost Poisson algebra gives rise to an almost tridendriform Poisson algebra, while conversely, an almost tridendriform Poisson algebra naturally induces a weighted relative Rota-Baxter operator on the associated almost Poisson algebra.
Finally, Section~\ref{Section5} is devoted to the study of relative averaging operators (also called embedding tensors) on almost Poisson algebras with respect to a suitable representation, leading to {\sf AWB} structures. We further provide characterizations of these operators in terms of their graphs and Nijenhuis operators.

~~

 \textbf{Notations and convention.}
\begin{enumerate}
    \item Throughout this paper, $\mathbb{K}$ is a field of characteristic zero and all vector spaces, tensor product, and algebras are finite dimensional over $\mathbb{K}$.
\item Let $V$ and $W$ be two vector spaces:
\begin{enumerate}
    \item Denote by $\tau:V\otimes W \rightarrow W\otimes V$ the switch  isomorphism, $\tau(v\otimes w)=w\otimes v$.
    \item For a linear map $\Delta: V \rightarrow \otimes^{2}V$ , we use Sweedler's notation $\Delta(x) = \sum_{(x)}
 x_1\otimes 
 x_2$ for $x \in V$. We will often omit the summation sign $\sum_{(x)}$
to simplify the notations.
\end{enumerate}
\end{enumerate}
\section{Matched pairs of  almost Poisson algebras}\label{Section2}
In this section, we study the basic definitions of commutative associative algebras and almost Poisson algebras, their representations, dual representations and matched pairs (see \cite{Bai2010, Shestakov, Kuzmin} for more details). 
 
\subsection{Almost Poisson Algebras and Representations}
 
\begin{defn}
An \textbf{associative algebra} is a pair $(A,\cdot)$, where $A$ is a vector space and $\cdot : A\times A \to A$ is a bilinear map satisfying the \textbf{associativity condition}:
$$
(x\cdot y)\cdot z = x \cdot (y\cdot z), \quad \forall\, x,y,z \in A.
$$
The algebra is called \textbf{commutative} if, in addition,
$$
x\cdot y = y\cdot x, \quad \forall\, x,y \in A.
$$
\end{defn}
 
\begin{defn}
Let $(A,\cdot)$ be an associative algebra. A \textbf{representation} (or \textbf{module}) of $A$ is a pair $(V, \mu)$, where $V$ is a vector space and $\mu:A\to \mathrm{End}(V)$ is a linear map satisfying
$$
\mu(x\cdot y)=\mu(x)\mu(y), \quad \forall\, x,y \in A.
$$
The map $\mu$ is also called a \textbf{representation of $A$ on the vector space $V$}.
\end{defn}
 
\begin{ex}
If $(A,\cdot)$ is an associative algebra and $L: A \to \mathrm{End}(A)$ is defined by
$$
L(x)(y) = x \cdot y, \quad \forall\, x, y \in A,
$$
then $L(x):A\to A$ is a linear map on $A$ satisfying
$$
L(x\cdot y)(z)=(x\cdot y)\cdot z=x\cdot (y\cdot z)=L(x)L(y)(z), \quad \forall\, x,y,z\in A,
$$
which is equivalent to $L(x\cdot y)=L(x)L(y)$ as an operator equality in $\mathrm{End}(A)$. Thus, $(A,L)$ is a representation of $(A,\cdot)$, called the \textbf{regular representation} of $(A,\cdot)$.
\end{ex}

Let $V$ be a vector space. For any linear map $\theta : A \to \mathrm{End}(V)$, we define the \textbf{dual linear map} $\theta^*:A \to \mathrm{End}(V^*)$ by
\begin{equation} \label{defeq-dualrepMA}
\langle \theta^*(x)\xi,v\rangle=-\langle\xi,\theta(x)v\rangle,\quad \forall\, x\in A,\,\xi\in V^*,\,v\in V,
\end{equation}
where $\langle \cdot,\cdot \rangle$ denotes the canonical pairing between $V$ and $V^*$.
 
\begin{lem}\label{lem:dualrep}
If $(V,\mu)$ is a representation of a commutative associative algebra $(A, \cdot)$, then $(V^*,-\mu^*)$ is also a representation of $(A, \cdot)$. In particular, $(A^*,-L^*)$ is a representation of $(A, \cdot)$.
\end{lem}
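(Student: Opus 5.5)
The plan is to verify the defining identity of a representation directly, by pairing against arbitrary vectors of $V$. Set $\nu:=-\mu^*:A\to\mathrm{End}(V^*)$. By \eqref{defeq-dualrepMA}, for $x\in A$, $\xi\in V^*$, $v\in V$,
$$\langle \nu(x)\xi,v\rangle=\langle \xi,\mu(x)v\rangle .$$
So $\nu(x)$ is simply the transpose of $\mu(x)$. We must show $\nu(x\cdot y)=\nu(x)\nu(y)$ for all $x,y\in A$. Since the pairing is nondegenerate ($V$ finite dimensional), it suffices to check
$$\langle \nu(x\cdot y)\xi,v\rangle=\langle \nu(x)\nu(y)\xi,v\rangle$$
for all $\xi\in V^*$ and $v\in V$.

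First compute the left side:
$$\langle \nu(x\cdot y)\xi,v\rangle=\langle \xi,\mu(x\cdot y)v\rangle=\langle\xi,\mu(x)\mu(y)v\rangle .$$
Then compute the right side by applying the transpose relation twice:
$$\langle \nu(x)\nu(y)\xi,v\rangle=\langle \nu(y)\xi,\mu(x)v\rangle=\langle \xi,\mu(y)\mu(x)v\rangle .$$
Transposition reverses the order of composition, so a priori the dual gives an anti-representation. This order reversal is the only point of substance in the argument, and commutativity of $A$ resolves it:
$$\mu(y)\mu(x)=\mu(y\cdot x)=\mu(x\cdot y)=\mu(x)\mu(y).$$
Hence the two sides agree, and $(V^*,-\mu^*)$ is a representation.

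For the particular case, apply this to the regular representation $(A,L)$ from the preceding example. It is a representation of $(A,\cdot)$, so $(A^*,-L^*)$ is a representation as well. No further obstacle is expected beyond the sign bookkeeping in \eqref{defeq-dualrepMA}: the minus sign in $-\mu^*$ exactly cancels the minus sign in the definition of $\theta^*$.
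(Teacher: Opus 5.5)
Your proof is correct. The paper states this lemma without proof (it is taken from the literature), so your direct pairing computation is exactly the standard verification needed: commutativity of $A$ undoes the order reversal $\mu(y)\mu(x)$ that transposition produces. As a minor remark, finite-dimensionality is not needed in your last step, since two elements of $V^*$ that agree on every $v\in V$ are equal by definition.
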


\begin{defn}
An \textbf{almost Poisson algebra} (also called  $(A,[\cdot,\cdot], \cdot)$ is a vector space $A$ equipped with  bilinear operation $\cdot:A\times A \to A$ and a skewsymmetric bracket $[\cdot,\cdot]:A\times A \to A$ such that:
\begin{enumerate}
\item $(A,\cdot)$ is a commutative associative algebra,
\item The \textbf{Leibniz rule} is satisfied:
\begin{equation} \label{eq:malpoisalg}
[x,y\cdot z]=[x, y] \cdot z+ y\cdot [x,z], \quad \forall\, x,y,z\in A.
\end{equation}
\end{enumerate}
\end{defn}
 
\begin{defn}
Let $(A, [\cdot,\cdot], \cdot)$ be an almost Poisson algebra, let $V$ be a vector space, and let $\varrho, \mu : A \to \mathrm{End}(V)$ be linear maps. Then $(V, \varrho, \mu)$ is called a \textbf{representation} (or \textbf{module}) of $A$ if $(V, \mu)$ is a representation of the associative algebra $(A, \cdot)$ and the following compatibility conditions hold for all $x, y \in A$:
\begin{align}
\varrho(x\cdot y)&=\mu(y)\varrho(x)+\mu(x)\varrho(y),\label{eq:repmalcpoisalg1}\\
\mu([x,y])&=\varrho(x)\mu(y)-\mu(y)\varrho(x).\label{eq:repmalcpoisalg2}
\end{align}
\end{defn}
 
\begin{thm}\label{thm:semidirect}
Let $(A, [\cdot,\cdot], \cdot)$ be an almost Poisson algebra and $(V, \varrho, \mu)$ a representation of $A$. Then the direct sum $A \oplus V$ becomes an almost Poisson algebra under the operations (still denoted by $[\cdot,\cdot]$ and $\cdot$):
\begin{align}
[x_1+v_1,x_2+v_2]&=[x_1,x_2]+\varrho(x_1)v_2-\varrho(x_2)v_1,\label{eq:semidirectbracket}\\
(x_1+v_1)\cdot(x_2+v_2)&=x_1\cdot x_2+ \mu(x_1)v_2+ \mu(x_2)v_1,\label{eq:semidirectproduct}
\end{align}
for all $x_1, x_2 \in A$ and $v_1,v_2 \in V$. We denote this algebra by $A\ltimes V$.
\end{thm}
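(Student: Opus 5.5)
The plan is to verify directly the three axioms of an almost Poisson algebra on $A\oplus V$: skew-symmetry of the bracket \eqref{eq:semidirectbracket}, commutativity and associativity of the product \eqref{eq:semidirectproduct}, and the Leibniz rule \eqref{eq:malpoisalg}. Bilinearity is clear. Skew-symmetry holds because $[x_1,x_2]$ is skew and the $V$-part $\varrho(x_1)v_2-\varrho(x_2)v_1$ changes sign when the indices are swapped. Commutativity holds because $x_1\cdot x_2=x_2\cdot x_1$ and the $V$-part $\mu(x_1)v_2+\mu(x_2)v_1$ is symmetric.

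For associativity, every expression is multilinear, so it suffices to check each component separately: the case with all three arguments in $A$, and the cases with exactly one argument in $V$. Products of two elements of $V$ vanish, so no other cases arise.
\begin{itemize}
\item For $x,y,z\in A$ this is associativity of $(A,\cdot)$.
\item For $(x\cdot y)\cdot v=x\cdot(y\cdot v)$, the identity reads $\mu(x\cdot y)v=\mu(x)\mu(y)v$, which is the module condition.
\item For $(x\cdot v)\cdot z$ versus $x\cdot(v\cdot z)$, the identity reads $\mu(z)\mu(x)v=\mu(x)\mu(z)v$. This follows from $\mu(z)\mu(x)=\mu(z\cdot x)=\mu(x\cdot z)=\mu(x)\mu(z)$, using commutativity of $A$.
\item For $(v\cdot y)\cdot z=v\cdot(y\cdot z)$, the identity reads $\mu(z)\mu(y)v=\mu(y\cdot z)v$, again by the module condition and commutativity.
\end{itemize}

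For the Leibniz rule $[a,b\cdot c]=[a,b]\cdot c+b\cdot[a,c]$, I again check the components. The $A$-component is the Leibniz rule in $A$. Each term is trilinear, so it suffices to take one argument in $V$ and the others in $A$.
\begin{itemize}
\item \emph{Case $a=v$, $b=y$, $c=z$.} The left side is $-\varrho(y\cdot z)v$. The right side is $-\mu(z)\varrho(y)v-\mu(y)\varrho(z)v$. These agree exactly by \eqref{eq:repmalcpoisalg1}.
\item \emph{Case $a=x$, $b=v$, $c=z$.} The left side is $\varrho(x)\mu(z)v$. The right side is $\mu(z)\varrho(x)v+\mu([x,z])v$. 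These agree by \eqref{eq:repmalcpoisalg2}.
\item \emph{Case $a=x$, $b=y$, $c=v$.} The left side is $\varrho(x)\mu(y)v$. The right side is $\mu([x,y])v+\mu(y)\varrho(x)v$. This is again \eqref{eq:repmalcpoisalg2}.
\end{itemize}

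The main point of care is bookkeeping of the signs coming from the skew-symmetric extension of $\varrho$, especially in the first Leibniz case. There one must see that \eqref{eq:repmalcpoisalg1} is precisely the needed identity, up to an overall sign. Everything else is routine substitution.
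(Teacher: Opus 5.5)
Your proposal is correct and takes the same route as the paper, which only asserts a straightforward verification from the representation axioms. Your case-by-case check supplies exactly the omitted details: \eqref{eq:repmalcpoisalg1} handles the Leibniz case with the first argument in $V$, and \eqref{eq:repmalcpoisalg2} handles the cases with the second or third argument in $V$. The one step worth stating explicitly is that, in the Leibniz rule, the cases with two or more arguments in $V$ vanish because $V\cdot V=0$ and $[V,V]=0$ in $A\ltimes V$, just as you noted for associativity.
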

 
\begin{proof}
Straightforward verification using the axioms of representations.
\end{proof}
 
\begin{lem}\label{lem:dualrepAP}
If $(V, \varrho, \mu)$ is a representation of an almost Poisson algebra $(A, [\cdot,\cdot], \cdot)$, then $(V^*, \varrho^*, -\mu^*)$ is also a representation of $(A, [\cdot,\cdot], \cdot)$, where $\varrho^*, \mu^*$ are defined as in \eqref{defeq-dualrepMA}. Therefore, both $(A,\mathrm{ad}, L)$ and $(A^*,\mathrm{ad}_A^*, -L^*)$ are representations of the almost Poisson algebra $(A, [\cdot,\cdot], \cdot)$.
\end{lem}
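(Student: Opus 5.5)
The plan is to verify the two defining conditions of a representation directly. I would pair both sides of each identity with an arbitrary $v\in V$ and use the definition \eqref{defeq-dualrepMA} of dual maps, so that every statement about $V^*$ becomes a statement about operators on $V$. The associative part is already settled: by Lemma~\ref{lem:dualrep}, $(V^*,-\mu^*)$ is a representation of the commutative associative algebra $(A,\cdot)$. It remains to check \eqref{eq:repmalcpoisalg1} and \eqref{eq:repmalcpoisalg2} for the pair $(\varrho^*,-\mu^*)$. No condition on $\varrho$ alone is required, since there is no Jacobi identity in play.

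\textbf{Condition \eqref{eq:repmalcpoisalg2}.} We must show $-\mu^*([x,y])=\varrho^*(x)(-\mu^*(y))+\mu^*(y)\varrho^*(x)$. Pairing with $\xi\in V^*$ and $v\in V$, the left side gives $\langle\xi,\mu([x,y])v\rangle$. Applying \eqref{defeq-dualrepMA} twice to each composite, the right side gives $\langle\xi,(\varrho(x)\mu(y)-\mu(y)\varrho(x))v\rangle$. The two agree by \eqref{eq:repmalcpoisalg2} for $V$.

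\textbf{Condition \eqref{eq:repmalcpoisalg1}.} This is the step needing an extra idea, because dualizing reverses the order of composition. Pairing shows that $\varrho^*(x\cdot y)=-\mu^*(y)\varrho^*(x)-\mu^*(x)\varrho^*(y)$ is equivalent to
\[
\mu(y)\varrho(x)+\mu(x)\varrho(y)=\varrho(x)\mu(y)+\varrho(y)\mu(x)
\]
on $V$. This is not one of the axioms. However, \eqref{eq:repmalcpoisalg2} gives $\varrho(x)\mu(y)-\mu(y)\varrho(x)=\mu([x,y])$ and $\varrho(y)\mu(x)-\mu(x)\varrho(y)=\mu([y,x])$. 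Since the bracket is skew-symmetric, these two commutators sum to zero, which is exactly the required identity. This is the main (though short) obstacle, and it is the point where skew-symmetry of the bracket is essential.

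\textbf{The particular cases.} For $(A,\mathrm{ad},L)$, where $\mathrm{ad}(x)y=[x,y]$:
\begin{enumerate}
\item[(i)] $L$ is an associative representation because $\cdot$ is associative (commutativity is used in the next item).
\item[(ii)] \eqref{eq:repmalcpoisalg1} reads $[x\cdot y,z]=y\cdot[x,z]+x\cdot[y,z]$. This follows by writing $[x\cdot y,z]=-[z,x\cdot y]$ and expanding with the Leibniz rule \eqref{eq:malpoisalg}, using skew-symmetry and commutativity.
\item[(iii)] \eqref{eq:repmalcpoisalg2} reads $[x,y]\cdot z=[x,y\cdot z]-y\cdot[x,z]$, which is exactly \eqref{eq:malpoisalg}.
\end{enumerate}
Applying the first part of the lemma to $(A,\mathrm{ad},L)$ then yields that $(A^*,\mathrm{ad}_A^*,-L^*)$ is a representation.
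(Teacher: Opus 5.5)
Your proof is correct and complete. The paper states this lemma without proof, so there is no argument of its own to compare with. Your direct verification by pairing against $v\in V$ is the natural argument, and it is fully rigorous.

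You correctly identify the one non-formal step. Dualizing \eqref{eq:repmalcpoisalg1} reverses the order of composition, so it yields the identity $\mu(y)\varrho(x)+\mu(x)\varrho(y)=\varrho(x)\mu(y)+\varrho(y)\mu(x)$, and this follows from \eqref{eq:repmalcpoisalg2} together with skew-symmetry of the bracket. This parallels the way commutativity of $\cdot$ is exactly what makes $-\mu^*$ a representation in Lemma~\ref{lem:dualrep}.

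Your checks that $(A,\mathrm{ad},L)$ is a representation are also right.
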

 
\subsection{Matched Pairs of Almost Poisson Algebras}
 
We now recall the notion of matched pairs of commutative associative algebras introduced by C. Bai \cite{Bai2010}.
 
\begin{defn}\label{defn:matchedpairCAA}
Let $(A_1,\cdot_1)$ and $(A_2,\cdot_2)$ be commutative associative algebras, and let $\mu_1:A_1\to \mathrm{End}(A_2)$ and $\mu_2:A_2\to \mathrm{End}(A_1)$ be linear maps such that $(A_2,\mu_1)$ and $(A_1,\mu_2)$ are representations of $A_1$ and $A_2$, respectively. If for all $x_1,y_1 \in A_1$ and $x_2,y_2 \in A_2$, the following compatibility conditions hold:
\begin{align}
\mu_1(x_1)(x_2\cdot_2 y_2)&=\bigl(\mu_1(x_1)x_2\bigr)\cdot_2 y_2+\mu_1\bigl(\mu_2(x_2)x_1\bigr)y_2,\label{eq:matchCAA1}\\
\mu_2(x_2)(x_1\cdot_1 y_1)&=\bigl(\mu_2(x_2)x_1\bigr)\cdot_1 y_1+\mu_2\bigl(\mu_1(x_1)x_2\bigr)y_1,\label{eq:matchCAA2}
\end{align}
then $(A_1,A_2, \mu_1,\mu_2)$ is called a \textbf{matched pair of commutative associative algebras}.
\end{defn}
 
\begin{prop}[\cite{Bai2010}]\label{prop:matchedpairCAA}
Let $(A_1,\cdot_1)$ and $(A_2,\cdot_2)$ be commutative associative algebras, and let $\mu_1:A_1\to \mathrm{End}(A_2)$ and $\mu_2:A_2\to \mathrm{End}(A_1)$ be linear maps. Define a bilinear map $"\cdot"$ on $A_1\oplus A_2$ by
\begin{equation}\label{Matass}
(x_1+x_2)\cdot(y_1+y_2)=x_1 \cdot_1 y_1+\mu_2(x_2)y_1+\mu_2(y_2)x_1+x_2\cdot_2 y_2+\mu_1(x_1)y_2+\mu_1(y_1)x_2.
\end{equation}
Then $(A_1\oplus A_2, \cdot)$ is a commutative associative algebra if and only if $(A_1,A_2, \mu_1,\mu_2)$ is a matched pair of commutative associative algebras. In this case, we denote this algebra by $A_1\, \bowtie A_2$. Moreover, every commutative associative algebra that is the direct sum of the underlying linear spaces of two subalgebras arises from a matched pair of commutative associative algebras.
\end{prop}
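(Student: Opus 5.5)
The plan is to verify the two axioms of a commutative associative algebra for the product \eqref{Matass} directly, and to match the resulting conditions term by term with the definition of a matched pair. By construction, \eqref{Matass} is symmetric under exchanging $x_1+x_2$ and $y_1+y_2$, because $\cdot_1$ and $\cdot_2$ are commutative. So commutativity of $(A_1\oplus A_2,\cdot)$ is automatic, and everything reduces to associativity. Since the associator
\[
(a,b,c)\mapsto (a\cdot b)\cdot c-a\cdot(b\cdot c)
\]
is trilinear, it suffices to test it on homogeneous triples, that is, triples in which each of $a,b,c$ lies in $A_1$ or in $A_2$. This gives eight cases.

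The cases $(x_1,y_1,z_1)$ and $(x_2,y_2,z_2)$ are exactly the associativity of $\cdot_1$ and of $\cdot_2$.

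For a mixed triple such as $(x_1,y_1,z_2)$, we compute $(x_1\cdot_1 y_1)\cdot z_2=\mu_2(z_2)(x_1\cdot_1y_1)+\mu_1(x_1\cdot_1y_1)z_2$. For the other bracketing,
\[
x_1\cdot(y_1\cdot z_2)=x_1\cdot\bigl(\mu_2(z_2)y_1+\mu_1(y_1)z_2\bigr)
= x_1\cdot_1\mu_2(z_2)y_1+\mu_1(x_1)\mu_1(y_1)z_2+\mu_2(\mu_1(y_1)z_2)x_1 .
\]
We then separate the $A_1$- and $A_2$-components.
\begin{itemize}
\item The $A_2$-component gives $\mu_1(x_1\cdot_1y_1)=\mu_1(x_1)\mu_1(y_1)$, which is the representation condition for $(A_2,\mu_1)$.
\item The $A_1$-component gives $\mu_2(z_2)(x_1\cdot_1y_1)=(\mu_2(z_2)y_1)\cdot_1x_1+\mu_2(\mu_1(y_1)z_2)x_1$. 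By commutativity and after renaming variables, this is \eqref{eq:matchCAA2}.
\end{itemize}

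The remaining mixed arrangements, $(x_1,z_2,y_1)$ and $(z_2,x_1,y_1)$, reduce to the same two identities by commutativity. Indeed, in a commutative algebra associativity for one ordering of the arguments implies it for the others: $(a,b,c)=0$ for all $a,b,c$ is equivalent to $(a b)c=(bc)a$ for all $a,b,c$, and permuting the arguments only yields the same identity. Symmetrically, the triples with two entries from $A_2$ and one from $A_1$ yield the representation condition for $(A_1,\mu_2)$ together with \eqref{eq:matchCAA1}.

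Collecting all eight cases, associativity holds if and only if both representation conditions and \eqref{eq:matchCAA1}–\eqref{eq:matchCAA2} hold. This is precisely the definition of a matched pair, so the equivalence follows.

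For the final assertion, let $(A,\cdot)$ be a commutative associative algebra with $A=A_1\oplus A_2$ as vector spaces, where $A_1$ and $A_2$ are subalgebras. For $x_1\in A_1$ and $x_2\in A_2$, decompose $x_1\cdot x_2$ into components and define $\mu_2(x_2)x_1$ as its $A_1$-component and $\mu_1(x_1)x_2$ as its $A_2$-component. Expanding by bilinearity, the product on $A$ is then exactly \eqref{Matass}. By the equivalence just proved, $(A_1,A_2,\mu_1,\mu_2)$ is a matched pair.

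The main obstacle is purely bookkeeping: making sure that each mixed case produces nothing beyond the representation axioms and \eqref{eq:matchCAA1}–\eqref{eq:matchCAA2}. The commutativity reduction keeps the number of genuinely distinct computations down to two, up to symmetry.
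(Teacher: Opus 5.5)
Your proof is correct. The paper gives no argument of its own for this proposition (it cites Bai), and your direct verification is the standard one: commutativity comes for free, trilinearity reduces associativity to homogeneous triples, the $(A_1,A_1,A_2)$ and $(A_2,A_2,A_1)$ cases split into the representation axioms together with \eqref{eq:matchCAA2} and \eqref{eq:matchCAA1}, and the final assertion follows from the projection construction. Your reduction of the remaining mixed orderings is also valid: with $f(a,b,c)=(ab)c$ symmetric in $a,b$, associativity for both $(a,b,c)$ and $(b,a,c)$ (available since $a,b$ range over all of $A_1$) forces $f$ to take the same value on all six permutations.
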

In the following, we study the notion of matched pairs for almost Poisson algebras.
 
Let $\left({A_1, [\cdot,\cdot]_1,\cdot_1}\right)$ and $\left({A_2, [\cdot,\cdot]_2,\cdot_2}\right)$ be almost Poisson algebras, and let
$$
\varrho_1,\mu_1: A_1\to \mathrm{End}(A_2) \quad \text{and} \quad \varrho_2,\mu_2: A_2\to \mathrm{End}(A_1)
$$
be linear maps. On the direct sum $A_1\oplus A_2$, define a bracket $[\cdot,\cdot]: (A_1\oplus A_2) \times (A_1\oplus A_2) \rightarrow A_1\oplus A_2$ by
\begin{equation}\label{MatMal}
[x_1+x_2,y_1+y_2]=[x_1,y_1]_1+\varrho_2(x_2)y_1-\varrho_2(y_2)x_1+[x_2,y_2]_2+\varrho_1(x_1)y_2-\varrho_1(y_1)x_2.
\end{equation}
 
\begin{defn}\label{defn:matchedpairAP}
Let $(A_1, [\cdot,\cdot]_1, \cdot_1)$ and $(A_2, [\cdot,\cdot]_2, \cdot_2)$ be two almost Poisson algebras. Let $\varrho_1, \mu_1: A_1\to \mathrm{End}(A_2)$ and $\varrho_2, \mu_2: A_2\to \mathrm{End}(A_1)$ be four linear maps such that:
\begin{enumerate}
\item $(A_1, A_2, \mu_1,\mu_2)$ is a matched pair of commutative associative algebras,
\item $(A_2, \varrho_1,\mu_1)$ and $(A_1,\varrho_2,\mu_2)$ are representations of the almost Poisson algebras $(A_1, [\cdot,\cdot]_1, \cdot_1)$ and $(A_2, [\cdot,\cdot]_2, \cdot_2)$, respectively,
\item The maps $\varrho_1,\varrho_2,\mu_1,\mu_2$ satisfy the following \textbf{compatibility conditions} for all $x_1, y_1 \in A_1$ and $x_2,y_2 \in A_2$:
\begin{align}
\varrho_2(x_2)(x_1\cdot_1 y_1)&=\bigl(\varrho_2(x_2)x_1\bigr)\cdot_1 y_1+x_1\cdot_1\bigl(\varrho_2(x_2)y_1\bigr)\nonumber\\
&\quad -\mu_2\bigl(\varrho_1(x_1)x_2\bigr)y_1-\mu_2\bigl(\varrho_1(y_1)x_2\bigr)x_1,\label{matpoi1}\\[0.2cm]
\varrho_1(x_1)(x_2\cdot_2 y_2)&=\bigl(\varrho_1(x_1)x_2\bigr)\cdot_2 y_2+x_2\cdot_2\bigl(\varrho_1(x_1)y_2\bigr)\nonumber\\
&\quad -\mu_1\bigl(\varrho_2(x_2)x_1\bigr)y_2-\mu_1\bigl(\varrho_2(y_2)x_1\bigr)x_2,\label{matpoi2}\\[0.2cm]
\bigl[x_1,\mu_2(x_2)y_1\bigr]_1-\varrho_2\bigl(\mu_1(y_1)x_2\bigr)x_1&=\mu_2\bigl(\varrho_1(x_1)x_2\bigr)y_1-\bigl(\varrho_2(x_2)x_1\bigr)\cdot_1 y_1\nonumber\\
&\quad +\mu_2(x_2)[x_1,y_1]_1,\label{matpoi3}\\[0.2cm]
\bigl[x_2,\mu_1(x_1)y_2\bigr]_2-\varrho_1\bigl(\mu_2(y_2)x_1\bigr)x_2&=\mu_1\bigl(\varrho_2(x_2)x_1\bigr)y_2-\bigl(\varrho_1(x_1)x_2\bigr)\cdot_2 y_2\nonumber\\
&\quad +\mu_1(x_1)[x_2,y_2]_2.\label{matpoi4}
\end{align}
\end{enumerate}
Such a structure is called a \textbf{matched pair of almost Poisson algebras} $A_1$ and $A_2$. We denote it by $(A_1, A_2, \varrho_1, \mu_1,\varrho_2,\mu_2)$.
\end{defn}
 
\begin{prop}\label{prop:matchedpair}
Let $(A_1, [\cdot,\cdot]_1, \cdot_1)$ and $(A_2, [\cdot,\cdot]_2, \cdot_2)$ be two almost Poisson algebras. For linear maps $\varrho_1, \mu_1: A_1\to \mathrm{End}(A_2)$ and $\varrho_2, \mu_2: A_2\to \mathrm{End}(A_1)$, define operations $"\cdot"$ and $[\cdot,\cdot]$ on $A_1\oplus A_2$ by \eqref{Matass} and \eqref{MatMal}, respectively. Then $(A_1\oplus A_2, [\cdot,\cdot], \cdot)$ is an almost Poisson algebra if and only if $(A_1, A_2, \varrho_1, \mu_1,\varrho_2,\mu_2)$ is a matched pair of almost Poisson algebras. In this case, we denote this almost Poisson algebra by $A_1\, \bowtie A_2$. Moreover, every almost Poisson algebra that is the direct sum of the underlying spaces of two subalgebras can be obtained from a matched pair of almost Poisson algebras.
\end{prop}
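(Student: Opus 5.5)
The plan is to verify the axioms of an almost Poisson algebra on $A_1\oplus A_2$ component by component, using multilinearity to reduce each identity to cases where every argument lies in either $A_1$ or $A_2$.

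\textbf{Commutative associative structure.} By Proposition~\ref{prop:matchedpairCAA}, $(A_1\oplus A_2,\cdot)$ is commutative associative if and only if $(A_1,A_2,\mu_1,\mu_2)$ is a matched pair of commutative associative algebras. This is condition (1) of Definition~\ref{defn:matchedpairAP}.

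\textbf{Skew-symmetry.} The bracket \eqref{MatMal} is skew-symmetric by inspection, since $[\cdot,\cdot]_1$ and $[\cdot,\cdot]_2$ are skew-symmetric and the mixed terms change sign when the two arguments are swapped.

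\textbf{Leibniz rule.} It remains to check $[x,y\cdot z]=[x,y]\cdot z+y\cdot[x,z]$. Both sides are trilinear, and the identity is symmetric in $y,z$ because $\cdot$ is commutative. So it suffices to take $x,y,z$ each in $A_1$ or $A_2$, up to the swap $y\leftrightarrow z$. This leaves six cases: $(x;y,z)$ with $x\in A_i$ and $\{y,z\}$ of type $\{1,1\}$, $\{1,2\}$ or $\{2,2\}$.

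\begin{itemize}
\item All three in $A_1$ (resp.\ $A_2$): the identity is the Leibniz rule of $A_1$ (resp.\ $A_2$), so it holds automatically.
\item $x=x_2\in A_2$, $y=x_1,z=y_1\in A_1$: expand with \eqref{Matass} and \eqref{MatMal} and project onto $A_1$ and $A_2$. The $A_1$-component gives \eqref{matpoi1}. The $A_2$-component gives $-\varrho_1(x_1\cdot_1y_1)x_2=-\mu_1(y_1)\varrho_1(x_1)x_2-\mu_1(x_1)\varrho_1(y_1)x_2$, which is \eqref{eq:repmalcpoisalg1} for the module $(A_2,\varrho_1,\mu_1)$.
\item $x=x_1\in A_1$, $y=y_1\in A_1$, $z=x_2\in A_2$: the two components should give \eqref{eq:repmalcpoisalg2} for $(A_2,\varrho_1,\mu_1)$ and \eqref{matpoi3}.
\item The remaining two cases are the mirror images of the previous two under $1\leftrightarrow 2$. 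They yield \eqref{matpoi2}, \eqref{matpoi4}, and the representation axioms for $(A_1,\varrho_2,\mu_2)$.
\end{itemize}

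Collecting all the component equations reproduces exactly conditions (1)–(3) of Definition~\ref{defn:matchedpairAP}. The argument therefore runs in both directions and proves the equivalence.

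\textbf{Main obstacle.} The step most likely to cause trouble is the mixed case $(x_1;y_1,x_2)$. Each side has several terms, and $\mu$ and $\varrho$ act in both directions. One must track signs coming from the skew-symmetric definition $[x_2,y_1]=\varrho_2(x_2)y_1-\varrho_1(y_1)x_2$, and use commutativity to rewrite $y_1\cdot\mu_2(x_2)x_1$ into the form $\bigl(\varrho_2(x_2)x_1\bigr)\cdot_1y_1$ that appears in \eqref{matpoi3}. I would first write out the $A_1$-projection carefully and rearrange it to match \eqref{matpoi3}. If its sign convention differs, I would check whether the discrepancy is absorbed by the module axiom \eqref{eq:repmalcpoisalg2}; if it is not, the error is in the expansion and I would redo it.

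\textbf{Final statement.} Suppose an almost Poisson algebra $A$ decomposes as $A_1\oplus A_2$ with both summands subalgebras. Define $\mu_1(x_1)x_2$, $\mu_2(x_2)x_1$ as the $A_2$- and $A_1$-components of $x_1\cdot x_2$. Define $\varrho_1(x_1)x_2$ as the $A_2$-component of $[x_1,x_2]$ and $\varrho_2(x_2)x_1$ as the $A_1$-component of $[x_2,x_1]$. With these choices the operations on $A$ take the form \eqref{Matass} and \eqref{MatMal}, so by the equivalence just proved they form a matched pair.
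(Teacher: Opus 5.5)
Your proposal is correct and takes the same route as the paper: expand the Leibniz rule on $A_1\oplus A_2$ using \eqref{Matass} and \eqref{MatMal}, then match components. It is more careful than the paper in two ways: the reduction to six homogeneous cases shows that the second component of each mixed case yields \eqref{eq:repmalcpoisalg1}--\eqref{eq:repmalcpoisalg2}, so these are derived rather than assumed (as the ``only if'' direction requires), and you also prove the final ``moreover'' clause, which the paper's proof leaves unaddressed. One small slip: in your obstacle paragraph, the term to rewrite by commutativity is $y_1\cdot_1\bigl(\varrho_2(x_2)x_1\bigr)$, not $y_1\cdot\mu_2(x_2)x_1$.
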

 
\begin{proof}
It is known   that $(A_1\oplus A_2, \cdot)$ is a commutative associative algebra if and only if $(A_1, A_2, \mu_1,\mu_2)$ is a matched pair of commutative associative algebras. 
Let $x_1,y_1,z_1 \in A_1$ and $x_2, y_2,z_2 \in A_2$. We verify the Leibniz rule \eqref{eq:malpoisalg} on $A_1 \oplus A_2$. By expanding using \eqref{Matass} and \eqref{MatMal}, we compute:
\begin{align*}
&[x_1+x_2,(y_1+y_2)\cdot(z_1+z_2)]-(z_1+z_2)\cdot[x_1+x_2, y_1+y_2]-(y_1+y_2)\cdot[x_1+x_2,z_1+z_2]\\
&=[x_1+x_2,y_1 \cdot_1 z_1+\mu_2(y_2)z_1+\mu_2(z_2)y_1+y_2\cdot_2 z_2+\mu_1(y_1)z_2+\mu_1(z_1)y_2]\\
&\quad -(z_1+z_2)\cdot\bigl([x_1,y_1]_1+\varrho_2(x_2)y_1-\varrho_2(y_2)x_1+[x_2,y_2]_2+\varrho_1(x_1)y_2-\varrho_1(y_1)x_2\bigr)\\
&\quad -(y_1+y_2)\cdot\bigl([x_1,z_1]_1+\varrho_2(x_2)z_1-\varrho_2(z_2)x_1+[x_2,z_2]_2+\varrho_1(x_1)z_2-\varrho_1(z_1)x_2\bigr).
\end{align*}
 
Expanding all terms systematically (using the fact that $(A_1,[\cdot,\cdot]_1,\cdot_1)$ and $(A_2,[\cdot,\cdot]_2, \cdot_2)$ satisfy the Leibniz rule, and that $(A_2, \varrho_1,\mu_1)$ and $(A_1,\varrho_2,\mu_2)$ are representations), we find that the Leibniz rule on $A_1 \oplus A_2$ is equivalent to the compatibility conditions \eqref{matpoi1}--\eqref{matpoi4}. Hence the conclusion holds.
\end{proof}
\section{Almost Poisson $D$-bialgebras}
 \label{Section3}
In this section, we introduce the concepts of Manin triples for almost Poisson algebras, standard Manin triples, almost Poisson coalgebras, and almost Poisson $D$-bialgebras. These structures generalize classical notions from Lie and associative algebras to the almost Poisson setting, combining Malcev and commutative associative components with appropriate compatibility conditions.
 
\subsection{Manin Triples of Almost Poisson Algebras}
 
\begin{defn}
A \textbf{Manin triple of almost Poisson algebras} consists of three almost Poisson algebras $(A, [\cdot,\cdot], \cdot)$, $(A^+, [\cdot,\cdot]^+, \cdot^+)$, and $(A^-, [\cdot,\cdot]^-, \cdot^-)$, together with a nondegenerate symmetric bilinear form $\omega(\cdot,\cdot)$ on the underlying vector space of $A$ that is \textbf{invariant} with respect to both operations, in the sense that
\begin{align}
\omega(x \cdot y, z) &= \omega(x, y \cdot z), \label{eq:bilinearform1} \\
\omega([x,y], z) &= \omega(x, [y,z]), \label{eq:bilinearform2}
\end{align}
for all $x,y,z \in A$, and satisfying the following conditions:
\begin{enumerate}[label=(\arabic*)]
\item $A^+$ and $A^-$ are almost Poisson subalgebras of $A$, i.e., they are closed under both $[\cdot,\cdot]$ and $\cdot$,
\item $A = A^+ \oplus A^-$ as vector spaces,
\item $A^+$ and $A^-$ are isotropic with respect to $\omega$, i.e., $\omega(a^+, b^+) = 0 = \omega(a^-, b^-)$ for all $a^+, b^+ \in A^+$ and $a^-, b^- \in A^-$.
\end{enumerate}
\end{defn}
 
A \textbf{homomorphism} between two Manin triples of almost Poisson algebras $(A_1, A_1^+, A_1^-)$ and $(A_2, A_2^+, A_2^-)$, associated with invariant bilinear forms $\omega_1$ and $\omega_2$ respectively, is an almost Poisson algebra homomorphism $\phi: A_1 \to A_2$ such that
\begin{equation*}
\phi(A_1^+) \subseteq A_2^+, \quad \phi(A_1^-) \subseteq A_2^-, \quad \omega_1(x,y) = \omega_2(\phi(x), \phi(y)) \quad \forall x,y \in A_1.
\end{equation*}
If $\phi$ is moreover an isomorphism of vector spaces, the two Manin triples are called \textbf{isomorphic}.
 
A Manin triple of almost Poisson algebras is thus a structure that simultaneously forms a Manin triple for the Malcev algebras $(A, [\cdot,\cdot])$, $(A^+, [\cdot,\cdot]^+)$, $(A^-, [\cdot,\cdot]^-)$ and a ``Manin triple'' for the commutative associative algebras $(A, \cdot)$, $(A^+, \cdot^+)$, $(A^-, \cdot^-)$, sharing the same invariant bilinear form $\omega$ and isotropic subalgebras $A^+$ and $A^-$. Consequently, $A^+$ and $A^-$ are Lagrangian subalgebras of $(A, \omega)$.

A particularly important case is the \textbf{standard Manin triple}, which involves the dual space.
 
\begin{defn}
Let $(A, [\cdot,\cdot], \cdot)$ be an almost Poisson algebra. Suppose there exists an almost Poisson algebra structure on the direct sum $A \oplus A^*$ of the underlying vector space of $A$ and its dual $A^*$ such that $(A \oplus A^*, A, A^*)$ forms a Manin triple of almost Poisson algebras, with respect to the invariant symmetric bilinear form
\begin{equation} \label{eq:stanMalPoi}
\omega_d(x + \xi, y + \eta) = \langle x, \eta \rangle + \langle \xi, y \rangle, \quad \forall x,y \in A, \ \xi,\eta \in A^*,
\end{equation}
where $\langle \cdot, \cdot \rangle$ denotes the canonical pairing between $A$ and $A^*$. Then $(A \oplus A^*, A, A^*)$ is called a \textbf{standard Manin triple of almost Poisson algebras}.
\end{defn}
 
Clearly, a standard Manin triple is a Manin triple. Conversely, every Manin triple of almost Poisson algebras is isomorphic to a standard one. The following structure theorem characterizes standard Manin triples in terms of matched pairs.
 
\begin{thm} \label{thm:matpair}
Let $(A, [\cdot,\cdot]_1, \cdot_1)$ and $(A^*, [\cdot,\cdot]_2, \cdot_2)$ be almost Poisson algebras. Then $(A \oplus A^*, A, A^*)$ is a standard Manin triple of almost Poisson algebras (with respect to $\omega_d$ given by \eqref{eq:stanMalPoi}) if and only if $(A, A^*, \varrho_1, \mu_1, \varrho_2, \mu_2)$ is a matched pair of almost Poisson algebras, where
\begin{equation*}
\varrho_1 = \mathrm{ad}_A^*, \quad \mu_1 = -L^*, \quad \varrho_2 = \mathrm{ad}_{A^*}^*, \quad \mu_2 = -L_{A^*}^*.
\end{equation*}
Here, $\mathrm{ad}_A^*: A^* \to \mathrm{End}(A)$ is the coadjoint representation defined by $\langle \mathrm{ad}_A^*(\xi) x, \eta \rangle = -\langle \xi, [x, \eta] \rangle$ for $\xi, \eta \in A^*$, $x \in A$; $L^*: A^* \to \mathrm{End}(A)$ is the dual of the left multiplication $L$ by $\langle L^*(\xi) x, \eta \rangle = -\langle \xi, x \cdot \eta \rangle$; and similarly for the representations on $A^*$.
\end{thm}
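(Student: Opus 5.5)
The plan is to reduce the statement to Proposition~\ref{prop:matchedpair}, using invariance of $\omega_d$ to pin down the mixed terms. The argument has two directions, and the key lemma for both is that invariance of $\omega_d$ forces the cross terms of the operations on $A\oplus A^*$ to be the coadjoint-type actions listed in the statement.

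\textbf{($\Rightarrow$).} Suppose $(A\oplus A^*,A,A^*)$ is a standard Manin triple. Since $A$ and $A^*$ are subalgebras and $A\oplus A^*$ is their direct sum, the last part of Proposition~\ref{prop:matchedpair} gives linear maps $\varrho_1,\mu_1:A\to\mathrm{End}(A^*)$ and $\varrho_2,\mu_2:A^*\to\mathrm{End}(A)$. These make $(A,A^*,\varrho_1,\mu_1,\varrho_2,\mu_2)$ a matched pair, and the operations on $A\oplus A^*$ are given by \eqref{Matass} and \eqref{MatMal}. Concretely, $x\cdot\xi=\mu_2(\xi)x+\mu_1(x)\xi$ and $[x,\xi]=-\varrho_2(\xi)x+\varrho_1(x)\xi$.

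It remains to identify the four maps. Take $x,y\in A$ and $\xi\in A^*$. Pairing $x\cdot\xi$ against $y$ via $\omega_d$, and using isotropy, \eqref{eq:bilinearform1} and commutativity, gives
\[
\langle \mu_1(x)\xi,y\rangle=\omega_d(x\cdot\xi,y)=\omega_d(\xi, x\cdot y)=\langle \xi,x\cdot y\rangle .
\]
This identifies $\mu_1$ with $-L^*$ under the sign convention \eqref{defeq-dualrepMA}. Similarly, pairing $x\cdot\xi$ against $\eta\in A^*$ gives $\langle\mu_2(\xi)x,\eta\rangle=\langle x,\xi\cdot\eta\rangle$, so $\mu_2=-L^*_{A^*}$. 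For the brackets, pair $[x,\xi]$ against $y$ and $\eta$ respectively. Using \eqref{eq:bilinearform2} together with skew-symmetry, we write $\omega_d([x,\xi],y)=-\omega_d([\xi,x],y)=-\omega_d(\xi,[x,y])$. This yields $\varrho_1=\mathrm{ad}_A^*$, and in the same way $\varrho_2=\mathrm{ad}^*_{A^*}$.

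\textbf{($\Leftarrow$).} Suppose the given maps form a matched pair. Proposition~\ref{prop:matchedpair} then makes $A\oplus A^*$, with \eqref{Matass} and \eqref{MatMal}, an almost Poisson algebra. It contains $A$ and $A^*$ as subalgebras, and both are isotropic for $\omega_d$ by definition. The form $\omega_d$ is symmetric and nondegenerate, so only invariance remains to be checked. By trilinearity, it suffices to verify \eqref{eq:bilinearform1} and \eqref{eq:bilinearform2} on triples of homogeneous elements, each lying in $A$ or in $A^*$. Triples entirely inside $A$ or entirely inside $A^*$ give zero on both sides by isotropy. The mixed triples reduce, by the dual definitions, to the same pairing identities as in the forward direction, now read backwards.

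\textbf{Main obstacle.} The main obstacle is bookkeeping of signs. The convention \eqref{defeq-dualrepMA} carries a minus sign, and the statement writes $\mu_1=-L^*$ but $\varrho_1=\mathrm{ad}^*$. For the bracket, invariance combined with skew-symmetry produces an extra sign, which must be checked to match $\mathrm{ad}^*$ with no compensating minus. I would first carefully verify one mixed case for each operation, for instance $(x,\xi,y)$, and fix the conventions there. The remaining cases then follow by symmetry of $\omega_d$ and (skew-)commutativity, so only about four genuinely distinct mixed identities need checking.
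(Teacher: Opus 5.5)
Your proposal is correct and follows the same route as the paper: both directions reduce to Proposition~\ref{prop:matchedpair}, and invariance of $\omega_d$ identifies the cross actions as $\mathrm{ad}_A^*$, $-L^*$, $\mathrm{ad}_{A^*}^*$, $-L_{A^*}^*$. Your sign checks are right (e.g.\ $\langle\varrho_1(x)\xi,y\rangle=-\langle\xi,[x,y]\rangle$ gives $\varrho_1=\mathrm{ad}_A^*$ with no compensating minus), and you supply the identification and invariance verifications that the paper only asserts.
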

 
\begin{proof}
There exists a commutative associative algebra structure on $A \oplus A^*$ making both $(A, \cdot_1)$ and $(A^*, \cdot_2)$ subalgebras, with $\omega_d$ satisfying \eqref{eq:bilinearform1}, if and only if $(A, A^*, -L^*, -L_{A^*}^*)$ is a matched pair of commutative associative algebras. 
 
Thus, if $(A \oplus A^*, A, A^*)$ is a standard Manin triple of almost Poisson algebras, then by Proposition \ref{prop:matchedpair} (with $A_1 = A$, $A_2 = A^*$, $\mu_1 = -L^*$, $\varrho_1 = \mathrm{ad}_A^*$, $\mu_2 = -L_{A^*}^*$, $\varrho_2 = \mathrm{ad}_{A^*}^*$), the structure $(A, A^*, \mathrm{ad}_A^*, -L^*, \mathrm{ad}_{A^*}^*, -L_{A^*}^*)$ forms a matched pair of almost Poisson algebras. Conversely, if this matched pair holds, then Proposition \ref{prop:matchedpair} yields an almost Poisson algebra structure on $A \oplus A^*$ with $A$ and $A^*$ as subalgebras, and $\omega_d$ is invariant on $A \oplus A^*$. Hence, $(A \oplus A^*, A, A^*)$ is a standard Manin triple.
\end{proof}
 
\subsection{Almost Poisson $D$-bialgebras}

To define almost Poisson $D$-bialgebras, we first introduce the dual notions for coalgebras.
 
\begin{defn}
A \textbf{cocommutative coassociative coalgebra} is a pair $(C, \Delta)$, where $C$ is a vector space and $\Delta: C \to C \otimes C$ is a linear map (the comultiplication) satisfying
\begin{align}
(\tau \circ \Delta) &= \Delta, \label{eq:cocommutative} \\
(\mathrm{id} \otimes \Delta) \circ \Delta &= (\Delta \otimes \mathrm{id}) \circ \Delta, \label{eq:coassociative}
\end{align}
where $\tau: C \otimes C \to C \otimes C$ is the flip map defined by $\tau(c_{(1)}\otimes c_{(2)}) = c_{(2)} \otimes c_{(1)}$ for all $c_{(1)}, c_{(1)} \in C$.

\end{defn}

\begin{defn}
A \textbf{commutative   infinitesimal $D$-bialgebra} is a triple $(A, \cdot, \Delta)$ such that:
\begin{enumerate}[label=(\arabic*)]
\item $(A, \cdot)$ is a commutative associative algebra,
\item $(A, \Delta)$ is a cocommutative coassociative coalgebra,
\item The comultiplication $\Delta$ is infinitesimal with respect to the product $\cdot$, i.e.,
\begin{equation} \label{eq:infinitesimal}
\Delta(x \cdot y) = (L(x) \otimes \mathrm{id}) \Delta(y) + (\mathrm{id} \otimes L(y)) \Delta(x), \quad \forall x,y \in A,
\end{equation}
where $L$ denotes left multiplication.
\end{enumerate}
\end{defn}
 According Sweedler notation Equations \eqref{eq:coassociative} and \eqref{eq:infinitesimal} is equivalent to the following 
\begin{align*}
&x_{(1)}\otimes x_{(2)(1)}\otimes x_{(2)(2)}=x_{(1)(1)}\otimes x_{(1)(2)}\otimes x_{(2)},\\
&(x\c y)_{(1)}\otimes (x\c y)_{(2)}=(x\cdot y_{(1)})\otimes y_{(2)}+ x_{(1)}\otimes (y\cdot x_{(2)}).  
\end{align*}
 
\begin{defn}
Let $C$ be a vector space and $\Delta, \delta: C \to C \otimes C$ linear maps. Then $(C, \Delta, \delta)$ is called an \textbf{almost Poisson coalgebra} if:
\begin{enumerate}[label=(\arabic*)]
\item $(C, \Delta)$ is a cocommutative coassociative coalgebra,
\item The co-Leibniz rule holds:
\begin{equation} 
(\mathrm{id} \otimes \Delta) \delta - (\delta \otimes \mathrm{id}) \Delta - (\tau \otimes \mathrm{id}) (\mathrm{id} \otimes \delta) \Delta = 0. \label{eq:Co1}
\end{equation}
\end{enumerate}
\end{defn}
 The identity \eqref{eq:Co1} is equivalente to via  Sweedler notation 
 $$x_{(1)}\otimes x_{(2)}^{(1)}\otimes x_{(2)}^{(2)}- x_{(1)}^{(1)}\otimes x_{(2)}^{(1)}\otimes x^{(2)}-  x^{(2)}_{(1)}\otimes x^{(1)}\otimes x^{(2)}_{(2)},$$
 where $x\in A$, $\delta(x)=x^{(1)}\otimes x^{(2)}$ and $\Delta(x)=x_{(1)}\otimes x_{(2)}$.
\begin{prop} \label{prop:dual-coalgebra}
The triple $(A, \Delta, \delta)$ is an almost Poisson coalgebra if and only if $(A^*, \Delta^*, \delta^*)$ is an almost Poisson algebra, where $\Delta^*, \delta^*: A^* \otimes A^* \to A^*$ are the adjoints (transposes) of $\Delta, \delta$.
\end{prop}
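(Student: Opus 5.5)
The proof is a dualization argument: each defining identity of the coalgebra $(A,\Delta,\delta)$ is the transpose of the corresponding identity of $(A^*,\Delta^*,\delta^*)$. Since everything is finite dimensional, a linear map $F:A\to A^{\otimes 3}$ vanishes if and only if its transpose $F^*:(A^*)^{\otimes 3}\to A^*$ vanishes. We pair $A^*\otimes A^*$ with $A\otimes A$ via $\langle \xi\otimes\eta, x\otimes y\rangle=\langle\xi,x\rangle\langle\eta,y\rangle$, and set $\xi\cdot\eta:=\Delta^*(\xi\otimes\eta)$ and $[\xi,\eta]:=\delta^*(\xi\otimes\eta)$, so that $\langle \xi\cdot\eta,x\rangle=\langle\xi\otimes\eta,\Delta(x)\rangle$ and $\langle [\xi,\eta],x\rangle=\langle\xi\otimes\eta,\delta(x)\rangle$.

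First we treat the associative part. Cocommutativity $\tau\Delta=\Delta$ transposes to $\Delta^*\tau=\Delta^*$, which is commutativity of $\cdot$ on $A^*$. Coassociativity $(\mathrm{id}\otimes\Delta)\Delta=(\Delta\otimes\mathrm{id})\Delta$ transposes, using $((f\otimes g)h)^*=h^*(f^*\otimes g^*)$, to $\Delta^*(\mathrm{id}\otimes\Delta^*)=\Delta^*(\Delta^*\otimes\mathrm{id})$. This is associativity. Thus $(A,\Delta)$ is a cocommutative coassociative coalgebra if and only if $(A^*,\Delta^*)$ is a commutative associative algebra, exactly as in the standard case.

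Next we treat the bracket. An almost Poisson algebra also requires skew-symmetry, which dualizes to $\tau\delta=-\delta$. This condition is not listed in the definition of an almost Poisson coalgebra, so I would read it as implicitly assumed, i.e.\ that $\delta$ is cocommutative up to sign, and state this in the proof. Under it, $\tau\delta=-\delta$ holds if and only if $\delta^*\tau=-\delta^*$.

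The key step, and the main obstacle, is the Leibniz rule. Pair each term of \eqref{eq:Co1} with $\xi\otimes\eta\otimes\zeta$:
\begin{align*}
\langle \xi\otimes\eta\otimes\zeta,(\mathrm{id}\otimes\Delta)\delta(x)\rangle&=\langle[\xi,\eta\cdot\zeta],x\rangle,\\
\langle \xi\otimes\eta\otimes\zeta,(\delta\otimes\mathrm{id})\Delta(x)\rangle&=\langle[\xi,\eta]\cdot\zeta,x\rangle,\\
\langle \xi\otimes\eta\otimes\zeta,(\tau\otimes\mathrm{id})(\mathrm{id}\otimes\delta)\Delta(x)\rangle&=\langle\eta\cdot[\xi,\zeta],x\rangle.
\end{align*}
In the third line the flip $\tau$ moves $\xi$ into the second slot, so that $\delta$ is paired with $\xi\otimes\zeta$ and the remaining slot with $\eta$. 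Hence \eqref{eq:Co1} holds if and only if $[\xi,\eta\cdot\zeta]=[\xi,\eta]\cdot\zeta+\eta\cdot[\xi,\zeta]$ for all $\xi,\eta,\zeta\in A^*$, which is \eqref{eq:malpoisalg}.

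The care needed is in tracking the leg order under $\tau$. If the Sweedler form of \eqref{eq:Co1} stated after its definition disagrees in leg placement, commutativity of $\cdot$ and skew-symmetry absorb the reordering. Combining the three dualities gives the equivalence.
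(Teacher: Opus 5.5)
Your proposal is correct and follows essentially the paper's argument: you dualize cocommutativity and coassociativity to commutativity and associativity, then pair the three terms of \eqref{eq:Co1} with $\xi\otimes\eta\otimes\zeta$ to obtain exactly the three terms of the Leibniz rule on $A^*$, just as the paper does. You are also right that $\tau\circ\delta=-\delta$ must be assumed for $\delta^*$ to be skew-symmetric. The paper's definition of an almost Poisson coalgebra omits this condition, and its proof only gestures at it through a stray ``Malcev coalgebra'' remark. Your closing hedge about the Sweedler form is unnecessary, since the operator identity \eqref{eq:Co1} is the actual definition and your computation works directly from it.
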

 
\begin{proof}
The conditions on $\Delta$ and $\delta$ dualize directly: $(A, \Delta)$ is a cocommutative coassociative coalgebra if and only if $(A^*, \Delta^*)$ is a commutative associative algebra, and $(A, \delta)$ is a Malcev coalgebra if and only if $(A^*, \delta^*)$ is a Malcev algebra.
 
For the co-Leibniz rule \eqref{eq:Co1}, let $\xi, \eta, \gamma \in A^*$. Define the product and bracket on $A^*$ by $\xi \cdot \eta = \Delta^*(\xi \otimes \eta)$ and $[\xi, \eta] = \delta^*(\xi \otimes \eta)$. Then, for all $x \in A$,
\begin{align*}
\langle (\mathrm{id} \otimes \Delta) \delta(x), \xi \otimes \eta \otimes \gamma \rangle &= \langle x, \delta^*(\xi \otimes (\eta \cdot \gamma)) \rangle = \langle x, [\xi, \eta \cdot \gamma] \rangle, \\
\langle (\delta \otimes \mathrm{id}) \Delta(x), \xi \otimes \eta \otimes \gamma \rangle &= \langle x, \Delta^*([\xi, \eta] \otimes \gamma) \rangle = \langle x, [\xi, \eta] \cdot \gamma \rangle, \\
\langle (\tau \otimes \mathrm{id}) (\mathrm{id} \otimes \delta) \Delta(x), \xi \otimes \eta \otimes \gamma \rangle &= \langle x, \Delta^*(\eta \cdot [\xi, \gamma]) \rangle = \langle x, \eta \cdot [\xi, \gamma] \rangle.
\end{align*}
Thus, \eqref{eq:Co1} holds if and only if the Leibniz rule \eqref{eq:malpoisalg} holds for $(A^*, [\cdot,\cdot], \cdot) = (A^*, \delta^*, \Delta^*)$.
\end{proof}

\begin{defn} \label{def:almostPoissonDbialgebra}
An \textbf{almost Poisson $D$-bialgebra} is a quintuple $(A, [\cdot,\cdot], \cdot, \Delta, \delta)$ satisfying:
\begin{enumerate}[label=(\arabic*)]
\item $(A, [\cdot,\cdot], \cdot)$ is an almost Poisson algebra,
\item $(A, \Delta, \delta)$ is an almost Poisson coalgebra,
\item $(A, \cdot, \Delta)$ is a commutative and cocommutative infinitesimal $D$-bialgebra (i.e., \eqref{eq:infinitesimal} holds),
\item The following compatibility conditions hold for all $x,y \in A$:
{\small\begin{align}
\delta(x \cdot y) + (\mathrm{ad}(y) \otimes \mathrm{id}) \Delta(x) - (\mathrm{id} \otimes L(x)) \delta(y) + (\mathrm{ad}(x) \otimes \mathrm{id}) \Delta(y) - (\mathrm{id} \otimes L(y)) \delta(x) &= 0, \label{eq:Bi4} \\
\Delta([x,y]) - (L(y) \otimes \mathrm{id}) \delta(x) - (\mathrm{id} \otimes \mathrm{ad}(x)) \Delta(y) + (\mathrm{id} \otimes L(y)) \delta(x) - (\mathrm{ad}(x) \otimes \mathrm{id}) \Delta(y) &= 0. \label{eq:Bi5}
\end{align}}
\end{enumerate}
\end{defn}

These conditions ensure that the bracket and comultiplication, as well as the product and cobracket, interact compatibly, dualizing the representation conditions in the algebra setting. They can be written by Sweedler notations, as follow
 \begin{align*}
(x \cdot y)^{(1)}\times (x \cdot y)^{(2)} + [y,x_{(1)}] \otimes x_{(2)} - y^{(1)} \otimes x\c y^{(1)} + [x,y_{(1)}] \otimes y_{(2)} - x^{(1)} \otimes y\c x^{(1)} &= 0,  \\
([x,y])_{(1)}\otimes ([x,y])_{(2)} - y\cdot x^{(1)} \otimes x^{(2)} - y_{(1)}\otimes [x,y_{(2)}] + x^{(1)} \otimes y\cdot  x^{(2)} - [x,y_{(1)}]\otimes y_{(2)} &= 0 ,
\end{align*}
where $x,y\in A$, $\delta(x)=x^{(1)}\otimes x^{(2)}$ and $\Delta(x)=x_{(1)}\otimes x_{(2)}$.
\begin{ex}

Any Poisson bialgebra (where the bracket is a Lie bracket) is an almost Poisson $D$-bialgebra.

\end{ex}
 
The following theorem establishes the equivalence between almost Poisson $D$-bialgebras and standard Manin triples via matched pairs.
 
\begin{thm} \label{thm:equiv-Dbialgebra}
Let $(A, [\cdot,\cdot], \cdot)$ be an almost Poisson algebra, and suppose $A^*$ carries an almost Poisson algebra structure dual to linear maps $\Delta, \delta: A \to A \otimes A$ (i.e., $\cdot^* = \Delta^*$, $[\cdot,\cdot]^* = \delta^*$). Then the following are equivalent:
\begin{enumerate}[label=(\arabic*)]
\item $(A, [\cdot,\cdot], \cdot, \Delta, \delta)$ is an almost Poisson $D$-bialgebra,
\item $(A, A^*, \mathrm{ad}_A^*, -L^*, \mathrm{ad}_{A^*}^*, -L_{A^*}^*)$ is a matched pair of almost Poisson algebras,
\item $(A \oplus A^*, A, A^*)$ is a standard Manin triple of almost Poisson algebras with respect to $\omega_d$ given by \eqref{eq:stanMalPoi}.
\end{enumerate}
\end{thm}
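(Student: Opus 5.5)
The equivalence (2)$\Leftrightarrow$(3) is exactly Theorem~\ref{thm:matpair}, applied with $(A^*,[\cdot,\cdot]_2,\cdot_2)=(A^*,\delta^*,\Delta^*)$. So the substance of the proof is (1)$\Leftrightarrow$(2). By Proposition~\ref{prop:dual-coalgebra}, $(A^*,\delta^*,\Delta^*)$ is an almost Poisson algebra precisely when $(A,\Delta,\delta)$ is an almost Poisson coalgebra. Hence conditions (1) and (2) of Definition~\ref{def:almostPoissonDbialgebra} match the standing hypotheses of (2). By Lemma~\ref{lem:dualrepAP}, $(A^*,\mathrm{ad}_A^*,-L^*)$ and $(A,\mathrm{ad}_{A^*}^*,-L_{A^*}^*)$ are automatically representations, so item 2 of Definition~\ref{defn:matchedpairAP} is free. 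What remains is to match two things: the commutative associative matched pair conditions \eqref{eq:matchCAA1}--\eqref{eq:matchCAA2} against \eqref{eq:infinitesimal}, and the four compatibilities \eqref{matpoi1}--\eqref{matpoi4} against \eqref{eq:Bi4}--\eqref{eq:Bi5}.

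For the associative part I would invoke the known result of Bai \cite{Bai2010} (and Aguiar's infinitesimal bialgebra theory). It says that $(A,A^*,-L^*,-L_{A^*}^*)$ is a matched pair of commutative associative algebras if and only if $\Delta$ satisfies \eqref{eq:infinitesimal}. To see this directly, pair \eqref{eq:matchCAA2} with an arbitrary $\eta\in A^*$, rewrite every term via $\langle L_{A^*}^*(\xi)x,\eta\rangle=-\langle x,\xi\cdot\eta\rangle=-\langle\Delta(x),\xi\otimes\eta\rangle$, and read off an identity in $A\otimes A$. By commutativity, \eqref{eq:matchCAA1} gives the same identity after transposition.

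For the Poisson part, the key step is the same dualization applied to each of \eqref{matpoi1}--\eqref{matpoi4}. Take \eqref{matpoi3} with $x_1=x$, $y_1=y$, $x_2=\xi$, pair with $\eta\in A^*$, and convert every occurrence of $\mathrm{ad}_{A^*}^*$, $L_{A^*}^*$ into $\delta$, $\Delta$, and every occurrence of $\mathrm{ad}_A^*$, $L^*$ into $[\cdot,\cdot]$, $\cdot$ on $A$. This gives an identity of the form $\langle T(x,y),\xi\otimes\eta\rangle=0$, where $T$ should be one of \eqref{eq:Bi4}, \eqref{eq:Bi5} (possibly after applying $\tau$). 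Similarly, \eqref{matpoi1} with $x_2=\xi$ paired against $\eta$ should produce the other of the two. Conditions \eqref{matpoi2} and \eqref{matpoi4} involve two elements of $A^*$ and one of $A$. I would pair them with $x\in A$ and show, using the invariance of $\omega_d$, skew-symmetry of $\delta^*$ and cocommutativity of $\Delta$, that they are equivalent to the same two tensor identities. This mirrors the Lie bialgebra argument, where the two cocycle-type matched pair conditions collapse to one.

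The main obstacle is this last bookkeeping. One must check that the signs and tensor-factor placements produced by dualizing really give exactly \eqref{eq:Bi4} and \eqref{eq:Bi5} as printed, and that the second pair of conditions is not an independent constraint. I would first dualize \eqref{matpoi3} carefully term by term, fixing conventions from the coadjoint formulas in Theorem~\ref{thm:matpair}. If the resulting expressions differ from \eqref{eq:Bi4}--\eqref{eq:Bi5} by a flip $\tau$ or by using symmetry of $\Delta$, I would reconcile them via cocommutativity. Then I would verify \eqref{matpoi4} by pairing with $x$ and reducing to the already obtained identity using $\langle[x,y],\xi\rangle$-type symmetries.
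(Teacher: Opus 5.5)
Your proposal is correct and follows essentially the same route as the paper. (2)$\Leftrightarrow$(3) is Theorem~\ref{thm:matpair}, and (1)$\Leftrightarrow$(2) comes from pairing the matched-pair conditions with elements of $A$ or $A^*$, where \eqref{matpoi1} dualizes exactly to \eqref{eq:Bi4} and \eqref{matpoi3} to \eqref{eq:Bi5}. Your explicit check of the other two conditions is more careful than the paper's sketch and goes through: pairing with $y\in A$, \eqref{matpoi2} becomes \eqref{eq:Bi5} (using $\tau\delta=-\delta$) and \eqref{matpoi4} becomes \eqref{eq:Bi4} (using $\tau\Delta=\Delta$ and skew-symmetry of $[\cdot,\cdot]$), so they impose nothing new.
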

 
\begin{proof}
By Theorem \ref{thm:matpair}, (2) $\Leftrightarrow$ (3). It remains to show (1) $\Leftrightarrow$ (2). The infinitesimal $D$-bialgebra condition \eqref{eq:infinitesimal} holds if and only if $(A, A^*, -L^*, -L_{A^*}^*)$ is a matched pair of commutative associative algebras. The almost Poisson coalgebra structure on $(A, \Delta, \delta)$ dualizes to the almost Poisson algebra on $A^*$, and the representations $(A^*, -L^*, \mathrm{ad}_A^*)$ and $(A, -L_{A^*}^*, \mathrm{ad}_{A^*}^*)$ follow from Lemma \ref{lem:dualrepAP}.
 
The additional compatibilities \eqref{eq:Bi4}--\eqref{eq:Bi5} are dual to the matched pair conditions \eqref{matpoi1}--\eqref{matpoi4}. Specifically, \eqref{matpoi1} dualizes to \eqref{eq:Bi4} (and \eqref{matpoi2} to the symmetric form), while \eqref{matpoi3}--\eqref{matpoi4} dualize to \eqref{eq:Bi5} and its counterpart. For instance, applying the pairing to \eqref{matpoi1} for $x,y \in A$ and $\xi, \eta \in A^*$ yields
\begin{align*}
&\langle (L(y) \otimes \mathrm{id}) \mathrm{ad}_{A^*}^*(\xi) x + (L(x) \otimes \mathrm{id}) \mathrm{ad}_{A^*}^*(\xi) y + (-L_{A^*}^*) (\mathrm{ad}_{A^*}^*(x) \xi) y + (-L_{A^*}^*) (\mathrm{ad}_{A^*}^*(y) \xi) x - \mathrm{ad}_{A^*}^*(\xi) (x \cdot y), \eta \rangle \\
&= \langle \delta(x \cdot y), \xi \otimes \eta \rangle + \langle (\mathrm{ad}(y) \otimes \mathrm{id}) \Delta(x), \xi \otimes \eta \rangle - \langle (\mathrm{id} \otimes L(x)) \delta(y), \xi \otimes \eta \rangle \\
&\quad + \langle (\mathrm{ad}(x) \otimes \mathrm{id}) \Delta(y), \xi \otimes \eta \rangle - \langle (\mathrm{id} \otimes L(y)) \delta(x), \xi \otimes \eta \rangle = 0,
\end{align*}
which is \eqref{eq:Bi4}. The other equivalences follow analogously by duality.
\end{proof}
 
\section{Dendrification of Almost Poisson Algebras}\label{Section4}
In this section we show that a weighted relative Rota-Baxter operator on an almost Poisson algebra gives rise to an almost tridendriform Poisson algebra; conversely, any almost tridendriform Poisson algebra naturally induces  almost Poisson algebra.
\begin{defn}[\cite{BaiGuoNi}]
Let $(A,\cdot)$ and $(V,\cdot_V)$ be two commutative associative algebras, and let $\mu:A\to\operatorname{End}(V)$ be a linear map such that $(V,\mu)$ is a representation of $(A,\cdot)$. Then $(V,\cdot_V,\mu)$ is called an \textit{$A$-module commutative associative algebra} if
\begin{equation}\label{eq:module-com-ass}
\mu(x)(a\cdot_V b)=(\mu(x)a)\cdot_V b,\qquad \forall x\in A,\;a,b\in V.
\end{equation}
\end{defn}

\begin{defn}
Let $(A,[\cdot,\cdot],\cdot)$ and $(V,[\cdot,\cdot]_V,\cdot_V)$ be two almost Poisson algebras, and let $\varrho,\mu:A\to\operatorname{End}(V)$ be linear maps satisfying:
\begin{enumerate}[label=\arabic*), leftmargin=*]
\item $(V,\cdot_V,\mu)$ is an $A$-module commutative associative algebra.
\item $(V,\mu,\varrho)$ is a representation of $A$.
\item For all $x,y\in A$ and $a,b\in V$, the following compatibilities hold:
\begin{align}
\varrho(x)(a\cdot_V b)&=(\varrho(x)a)\cdot_V b + a\cdot_V(\varrho(x)b),\label{eq:module-malpoi-1}\\
[a,\mu(x)b]_V &= -(\varrho(x)a)\cdot_V b + \mu(x)[a,b]_V.\label{eq:module-malpoi-3}
\end{align}
\end{enumerate}
Then $(V,\cdot_V,[\cdot,\cdot]_V,\mu,\varrho)$ is called an \textit{$A$-module almost Poisson algebra}.
\end{defn}

\begin{prop}
With the above notations, $(V,\cdot_V,[\cdot,\cdot]_V,\mu,\varrho)$ is an $A$-module almost Poisson algebra if and only if the direct sum $A\oplus V$ carries an almost Poisson algebra structure with products defined for all $x,y\in A$ and $a,b\in V$ by
\begin{align*}
\{x+a,\,y+b\} &= [x,y] + \varrho(x)b - \varrho(y)a + [a,b]_V,\\
(x+a)\bullet(y+b) &= x\cdot y + \mu(x)b + \mu(y)a + a\cdot_V b.
\end{align*}
This algebra is called the \textit{semi-direct product} and is denoted by $A\ltimes V$.
\end{prop}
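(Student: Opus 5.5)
The plan is a direct verification that splits the axioms of $A\oplus V$ into pieces indexed by how many arguments come from $V$, and matches each piece with one of the defining conditions. Write elements as $x+a$, $y+b$, $z+c$ with $x,y,z\in A$ and $a,b,c\in V$. Skew-symmetry of $\{\cdot,\cdot\}$ and commutativity of $\bullet$ are immediate, since $[\cdot,\cdot]$ and $[\cdot,\cdot]_V$ are skew-symmetric, $\cdot$ and $\cdot_V$ are commutative, and the cross terms appear symmetrically. Everything else is trilinear: associativity of $\bullet$ and the Leibniz rule $\{X,Y\bullet Z\}=\{X,Y\}\bullet Z+Y\bullet\{X,Z\}$. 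Each such identity therefore holds if and only if it holds on every triple of homogeneous arguments. Such a triple lies in $A$ or $V$ in each slot, giving $2^3$ patterns, and I would treat the patterns one by one.

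For associativity of $\bullet$, the pattern with all three arguments in $A$ is associativity in $A$. The pattern with all three in $V$ is associativity of $\cdot_V$. The patterns with exactly one argument in $V$ reduce to $\mu(x\cdot y)=\mu(x)\mu(y)$ together with commutativity; this is the representation condition. The patterns with exactly two arguments in $V$, for instance $(x\bullet a)\bullet b=x\bullet(a\bullet b)$, read $(\mu(x)a)\cdot_V b=\mu(x)(a\cdot_V b)$, which is exactly \eqref{eq:module-com-ass}. The other placements follow from this one using commutativity of $\cdot_V$. This reproduces the known characterization of $A$-module commutative associative algebras from \cite{BaiGuoNi}.

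For the Leibniz rule I would do the same bookkeeping. With all arguments in $A$, or all in $V$, one recovers the Leibniz rules of $A$ and of $V$. With one argument in $V$, the identity becomes one of the representation axioms. If the $V$-argument is in the first slot, $\{a,y\bullet z\}=\ldots$ gives $-\varrho(y\cdot z)a=-\mu(z)\varrho(y)a-\mu(y)\varrho(z)a$, which is \eqref{eq:repmalcpoisalg1}. If it is in the second or third slot, one gets \eqref{eq:repmalcpoisalg2}, essentially the argument of Theorem~\ref{thm:semidirect}. With two arguments in $V$ there are three placements. The placement $\{x,a\bullet b\}$ yields \eqref{eq:module-malpoi-1}. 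The placement $\{a,x\bullet b\}=\{a,x\}\bullet b+x\bullet\{a,b\}$ yields $[a,\mu(x)b]_V=-(\varrho(x)a)\cdot_V b+\mu(x)[a,b]_V$, which is \eqref{eq:module-malpoi-3}. The placement $\{a,b\bullet x\}$ is the same identity after using commutativity of $\bullet$.

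Reading these pattern identities from right to left gives the converse. If $A\oplus V$ with these operations is almost Poisson, then evaluating the axioms on the corresponding homogeneous triples recovers each defining condition. Note also that $V$ and $A$ sit inside $A\oplus V$ as subalgebras and $\{\cdot,\cdot\}$, $\bullet$ restrict to the given structures. The main obstacle is the bookkeeping in the mixed patterns, and in particular checking that no pattern produces a condition beyond those listed. The delicate point is that the definition lists $(V,\mu,\varrho)$ as a representation, so $\varrho$ and $\mu$ appear there in the reverse order; I will read this as $(V,\varrho,\mu)$. I would also check carefully that the sign convention in \eqref{eq:module-malpoi-3} matches the expansion of $\{a,x\}=-\varrho(x)a$.
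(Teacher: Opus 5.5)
Your proposal is correct and follows essentially the paper's route. The paper expands $\{x+a,(y+b)\bullet(z+c)\}$ and $\{x+a,y+b\}\bullet(z+c)+(y+b)\bullet\{x+a,z+c\}$ on general elements and matches the resulting terms against the module axioms; your eight homogeneous patterns are exactly that term-by-term grouping made explicit, and the sign in \eqref{eq:module-malpoi-3} does match $\{a,x\}=-\varrho(x)a$. Your version is slightly more complete than the paper's, since you also check associativity of $\bullet$ and spell out the converse by evaluating on homogeneous triples, both of which the paper leaves implicit.
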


\begin{proof}
It suffices to verify that the Jacobi identity \eqref{eq:malpoisalg} holds in $A\ltimes V$. Computing both sides explicitly:
\begin{align*}
&\{x+a,\,(y+b)\bullet(z+c)\}\\
&= \{x+a,\; y\cdot z + \mu(y)c + \mu(z)b + b\cdot_V c\}\\
&= [x,y\cdot z] + \varrho(x)\mu(y)c + \varrho(x)\mu(z)b + \varrho(x)(b\cdot_V c)\\
&\quad -\varrho(y\cdot z)a + [a,\mu(y)c]_V + [a,\mu(z)b]_V + [a,b\cdot_V c]_V,
\end{align*}
and
\begin{align*}
&\{x+a,y+b\}\bullet(z+c) + (y+b)\bullet\{x+a,z+c\}\\
&= \big([x,y]+\varrho(x)b-\varrho(y)a+[a,b]_V\big)\bullet(z+c)\\
&\quad + (y+b)\bullet\big([x,z]+\varrho(x)c-\varrho(z)a+[a,c]_V\big)\\
&= [x,y]\cdot z + \mu([x,y])c + \mu(z)\varrho(x)b - \mu(z)\varrho(y)a + \mu(z)[a,b]_V\\
&\quad + (\varrho(x)b)\cdot_V c - (\varrho(y)a)\cdot_V c + ([a,b]_V)\cdot_V c\\
&\quad + y\cdot[x,z] + \mu(y)\varrho(x)c - \mu(y)\varrho(z)a + \mu(y)[a,c]_V\\
&\quad + \mu([x,z])b + b\cdot_V(\varrho(x)c) - b\cdot_V(\varrho(z)a) + b\cdot_V([a,c]_V).
\end{align*}
The two expressions coincide precisely when $(V,\cdot_V,[\cdot,\cdot]_V,\mu,\varrho)$ satisfies the axioms of an $A$-module almost Poisson algebra. Hence $(A\ltimes V,\bullet,\{\cdot,\cdot\})$ is an almost Poisson algebra exactly under that condition.
\end{proof}

\begin{ex}
Let $(A,[\cdot,\cdot],\cdot)$ be an almost Poisson algebra. Then $(A,\operatorname{ad}_{[,]},L_\cdot)$ — where $\operatorname{ad}_{[,]}(x)y=[x,y]$ and $L_\cdot(x)y=x\cdot y$ — is an $A$-module almost Poisson algebra.
\end{ex}

\begin{defn}
A \textit{commutative dendriform trialgebra} is a $\mathbb{K}$-linear space $A$ equipped with two bilinear operations $\cdot$ and $\vartriangleright$ such that $(A,\cdot)$ is a commutative associative algebra and for all $x,y,z\in A$:
\begin{align}
(x\cdot y)\vartriangleright z &= x\vartriangleright(y\vartriangleright z),\label{eq:comm-dendr-1}\\
(x\vartriangleright y)\cdot z &= x\vartriangleright(y\cdot z).\label{eq:comm-dendr-2}
\end{align}
\end{defn}

\begin{prop}\label{prop:comm-dendr-to-ass}
Let $(A,\cdot,\vartriangleright)$ be a commutative dendriform trialgebra. Then the product
\begin{equation}\label{eq:sub-adjacent-ass}
x\circ y = x\vartriangleright y + y\vartriangleright x + x\cdot y
\end{equation}
defines a commutative associative algebra structure on $A$, denoted by $A^C$ and called the \textit{sub-adjacent commutative associative algebra}. Moreover, the map $\mathcal{L}_{\vartriangleright}:A\to\operatorname{End}(A)$ given by $\mathcal{L}_{\vartriangleright}(x)y = x\vartriangleright y$ endows $A$ with an $A^C$-module commutative associative algebra structure.
\end{prop}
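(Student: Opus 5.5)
The argument is a direct verification from the two axioms \eqref{eq:comm-dendr-1}--\eqref{eq:comm-dendr-2}, together with commutativity and associativity of $\cdot$.

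Commutativity of $\circ$ is immediate from \eqref{eq:sub-adjacent-ass}, since the expression is symmetric in $x,y$ and $\cdot$ is commutative.

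For associativity, I expand both sides. The left side is
\[(x\circ y)\circ z=(x\circ y)\vartriangleright z+z\vartriangleright(x\circ y)+(x\circ y)\cdot z.\]
Expanding $x\circ y$ in each slot gives nine terms. The first group is
\[(x\vartriangleright y)\vartriangleright z+(y\vartriangleright x)\vartriangleright z+(x\cdot y)\vartriangleright z.\]
The second group is
\[z\vartriangleright(x\vartriangleright y)+z\vartriangleright(y\vartriangleright x)+z\vartriangleright(x\cdot y).\]
The third group is
\[(x\vartriangleright y)\cdot z+(y\vartriangleright x)\cdot z+x\cdot y\cdot z.\]

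I will not use the form $(x\vartriangleright y)\vartriangleright z$ directly. Instead I reduce every term to a normal form using these rules:
\begin{itemize}
\item $(a\cdot b)\vartriangleright c=a\vartriangleright(b\vartriangleright c)$, from \eqref{eq:comm-dendr-1};
\item $(a\vartriangleright b)\cdot c=a\vartriangleright(b\cdot c)$, from \eqref{eq:comm-dendr-2}, which by commutativity of $\cdot$ also handles $c\cdot(a\vartriangleright b)$.
\end{itemize}

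The key observation is that the right side $x\circ(y\circ z)$, after expansion, yields the same multiset of terms under the permutation exchanging the roles of $x$ and $z$. Since $\circ$ is commutative, it suffices to show that $(x\circ y)\circ z$ is symmetric under $x\leftrightarrow z$, because $x\circ(y\circ z)=(z\circ y)\circ x$.

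So I organize the nine terms by type.
\begin{itemize}
\item The term $x\cdot y\cdot z$ is fully symmetric.
\item The terms $(x\vartriangleright y)\cdot z=x\vartriangleright(y\cdot z)$ and $(y\vartriangleright x)\cdot z=y\vartriangleright(x\cdot z)$ combine with $z\vartriangleright(x\cdot y)$ into the sum $\sum x\vartriangleright(y\cdot z)$ over the three choices of the outer letter. This sum is symmetric.
\item The term $(x\cdot y)\vartriangleright z=x\vartriangleright(y\vartriangleright z)$, together with $z\vartriangleright(x\vartriangleright y)$ and $z\vartriangleright(y\vartriangleright x)$, should pair with the terms $(x\vartriangleright y)\vartriangleright z$ and $(y\vartriangleright x)\vartriangleright z$.
\end{itemize}

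Here is the main obstacle. The axioms give no rule for $(a\vartriangleright b)\vartriangleright c$. Symmetry under $x\leftrightarrow z$ requires
\[(x\vartriangleright y)\vartriangleright z+(y\vartriangleright x)\vartriangleright z+x\vartriangleright(y\vartriangleright z)\]
to match the same expression with $x$ and $z$ swapped, modulo the already symmetric terms. The commutative tridendriform axioms as usually stated (Loday's, in the form $(x\vartriangleright y)\vartriangleright z$ related to $(x\circ y)\vartriangleright z$) include
\[x\vartriangleright(y\vartriangleright z)=(x\circ y)\vartriangleright z.\]
I expect the intended reading of \eqref{eq:comm-dendr-1} to be exactly this, with $x\cdot y$ replaced by the full sum $x\vartriangleright y+y\vartriangleright x+x\cdot y$. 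If so, the problematic terms collapse to $z$-outer and $x$-outer iterated actions, and symmetry follows.

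I would therefore first check the argument with that interpretation. If I must keep \eqref{eq:comm-dendr-1} literally, I would look for the missing cancellation using \eqref{eq:comm-dendr-2}, and otherwise flag it.

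For the module statement, the conditions to check are the following.
\begin{itemize}
\item $\mathcal L_\vartriangleright(x\circ y)=\mathcal L_\vartriangleright(x)\mathcal L_\vartriangleright(y)$, which is precisely the (full) axiom \eqref{eq:comm-dendr-1}.
\item \eqref{eq:module-com-ass}, namely $x\vartriangleright(a\cdot b)=(x\vartriangleright a)\cdot b$, which is exactly \eqref{eq:comm-dendr-2}.
\end{itemize}

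Both are direct rewritings, so the only real work is the associativity bookkeeping above.
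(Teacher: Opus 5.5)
The paper states this proposition without proof, so the main point to settle is your diagnosis. It is correct. With \eqref{eq:comm-dendr-1} read literally (the original product $\cdot$ on the left), the statement is false, so there is no cancellation via \eqref{eq:comm-dendr-2} to look for.

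Here is a counterexample to the literal reading. Take $\cdot=0$ on a space with basis $e_1,\dots,e_4$, and let the only nonzero products be $e_1\vartriangleright e_2=e_3$ and $e_3\vartriangleright e_1=e_4$. Then $x\vartriangleright e_3=x\vartriangleright e_4=0$, so $x\vartriangleright(y\vartriangleright z)=0$ for all $x,y,z$. Hence both axioms hold literally, yet
\[
(e_2\circ e_1)\circ e_1=e_3\circ e_1=e_4\neq 0=e_2\circ(e_1\circ e_1).
\]

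The reading you propose, $(x\circ y)\vartriangleright z=x\vartriangleright(y\vartriangleright z)$, is Loday's commutative tridendriform axiom. It is also the one the paper actually relies on, for two reasons:
\begin{enumerate}
\item Lemma \ref{lem:kup-to-dendr} holds only under it, where it is exactly \eqref{eq:kup-ass}. Read literally with $\lambda=0$, it would force $\mu(\mathcal{R}(a)\cdot\mathcal{R}(b))=0$.
\item The proof of Theorem \ref{thm:kup-to-post} expands $(b\cdot c)\diamond a$ as the full sum, matching the overloaded $\cdot$ in \eqref{eq:induced-ops}.
\end{enumerate}

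With the corrected axiom your reduction works, but one step should be made explicit. The first group collapses to $(x\circ y)\vartriangleright z=x\vartriangleright(y\vartriangleright z)$, and the $\cdot$-terms are symmetric. The remaining terms
\[
x\vartriangleright(y\vartriangleright z)+z\vartriangleright(x\vartriangleright y)+z\vartriangleright(y\vartriangleright x)
\]
are then invariant under $x\leftrightarrow z$ only because
\[
z\vartriangleright(x\vartriangleright y)=(z\circ x)\vartriangleright y=(x\circ z)\vartriangleright y=x\vartriangleright(z\vartriangleright y).
\]
In other words, the operators $\mathcal{L}_{\vartriangleright}(x)$ pairwise commute, which follows from the corrected axiom together with commutativity of $\circ$. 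Your phrase ``symmetry follows'' hides this, so write it out.

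The module part is then exactly as you say: the representation condition for $A^C$ is the corrected \eqref{eq:comm-dendr-1}, and \eqref{eq:module-com-ass} is \eqref{eq:comm-dendr-2}.
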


\begin{defn}
An \textit{almost tridendriform  Poisson algebra} is a tuple $(A,[\cdot,\cdot],\diamond,\cdot,\vartriangleright)$ where:
\begin{itemize}
\item $(A,[\cdot,\cdot],\cdot)$ is an almost Poisson algebra;
\item $(A,\cdot,\vartriangleright)$ is a commutative dendriform trialgebra,
\end{itemize}
and the following compatibility conditions hold for all $x,y,z\in A$:
\begin{align}
x\diamond(y\cdot z) &= (x\diamond y)\cdot z + y\cdot(x\diamond z),\label{eq:post-cond-1}\\
[x,z\vartriangleright y] &= z\vartriangleright[x,y] - y\cdot(z\diamond x),\label{eq:post-cond-2}\\
(y\cdot z)\diamond x &= z\vartriangleright(y\diamond x) + y\vartriangleright(z\diamond x),\label{eq:post-cond-3}\\
\{x,z\}\vartriangleright y &= x\diamond(z\vartriangleright y) - z\vartriangleright(x\diamond y),\label{eq:post-cond-4}
\end{align}
where the operations $\cdot$ and $\{\cdot,\cdot\}$ are defined by
\begin{equation}\label{eq:induced-ops}
x\cdot y = x\vartriangleright y + y\vartriangleright x + x\cdot y,\qquad
\{x,y\} = x\diamond y - y\diamond x + [x,y].
\end{equation}
\end{defn}

\begin{re}
If in an almost tridendriform  Poisson algebra $(A,[\cdot,\cdot],\diamond,\cdot,\vartriangleright)$ the operations $[\cdot,\cdot]$ and $\cdot$ are trivial, then it reduces to an \textit{almost pre-Poisson algebra}, which generalizes the notion of pre-Poisson algebra introduced by M. Aguiar in \cite{Aguiar00}.
\end{re}

We now establish the fundamental relationship:

\begin{thm}
Let $(A,[\cdot,\cdot],\diamond,\cdot,\vartriangleright)$ be an almost tridendriform  Poisson algebra. Then $(A,\cdot,\{\cdot,\cdot\})$ — with $\cdot$ and $\{\cdot,\cdot\}$ given by \eqref{eq:induced-ops}   is an almost Poisson algebra, called the \textit{associated almost Poisson algebra} of $A$ and denoted by $A^C$.
\end{thm}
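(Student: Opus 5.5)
We must show that $(A,\circ,\{\cdot,\cdot\})$, where $x\circ y=x\vartriangleright y+y\vartriangleright x+x\cdot y$ and $\{x,y\}=x\diamond y-y\diamond x+[x,y]$, is an almost Poisson algebra. To avoid the clash of notation in \eqref{eq:induced-ops}, write $\circ$ for the new product, as in Proposition~\ref{prop:comm-dendr-to-ass}. There are three things to check: commutativity and associativity of $\circ$, skew-symmetry of $\{\cdot,\cdot\}$, and the Leibniz rule \eqref{eq:malpoisalg} for the pair $(\circ,\{\cdot,\cdot\})$.

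The first item is exactly Proposition~\ref{prop:comm-dendr-to-ass}, applied to the commutative dendriform trialgebra $(A,\cdot,\vartriangleright)$. Skew-symmetry of $\{\cdot,\cdot\}$ is immediate, since $x\diamond y-y\diamond x$ and $[x,y]$ are both skew. So the whole content is the Leibniz rule
\[
\{x,y\circ z\}=\{x,y\}\circ z+y\circ\{x,z\},\qquad \forall\, x,y,z\in A.
\]

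The plan is to expand both sides into the elementary operations $\cdot,[\cdot,\cdot],\diamond,\vartriangleright$ and sort terms by which variable is acted on last. On the left, $\{x,y\circ z\}$ expands as
\[
x\diamond(y\vartriangleright z+z\vartriangleright y+y\cdot z)-(y\vartriangleright z+z\vartriangleright y+y\cdot z)\diamond x+[x,y\vartriangleright z]+[x,z\vartriangleright y]+[x,y\cdot z].
\]
The right side expands into analogous terms of the shapes $(x\diamond y)\vartriangleright z$, $z\vartriangleright(x\diamond y)$, $(x\diamond y)\cdot z$, $[x,y]\vartriangleright z$, $z\vartriangleright [x,y]$, $[x,y]\cdot z$, and so on, together with the same terms with $y\leftrightarrow z$. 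Then match the terms in groups:
\begin{itemize}
\item $[x,y\cdot z]$ equals $[x,y]\cdot z+y\cdot[x,z]$ by the Leibniz rule of $(A,[\cdot,\cdot],\cdot)$.
\item $x\diamond(y\cdot z)$ is handled by \eqref{eq:post-cond-1}.
\item $-(y\cdot z)\diamond x$ is handled by \eqref{eq:post-cond-3}.
\item $[x,z\vartriangleright y]$ and $[x,y\vartriangleright z]$ are handled by \eqref{eq:post-cond-2}. These produce $z\vartriangleright[x,y]$, $y\vartriangleright[x,z]$ and correction terms $-y\cdot(z\diamond x)$, $-z\cdot(y\diamond x)$.
\item The terms $x\diamond(z\vartriangleright y)$ and $x\diamond(y\vartriangleright z)$ are rewritten by \eqref{eq:post-cond-4} as $\{x,z\}\vartriangleright y+z\vartriangleright(x\diamond y)$, and symmetrically. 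Here $\{\cdot,\cdot\}$ is the new bracket, which is exactly what appears on the right side in the terms $\{x,z\}\vartriangleright y$.
\item The remaining $\vartriangleright$-terms, such as $-(y\vartriangleright z)\diamond x$, must cancel against $-z\vartriangleright(y\diamond x)$-type terms coming from $\{x,y\}\circ z$.
\end{itemize}

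I expect the main obstacle to be this last bookkeeping step. The axioms as listed give no direct rule for $(y\vartriangleright z)\diamond x$, so one must check whether these terms cancel, possibly after using commutativity of $\cdot$ and the dendriform identities \eqref{eq:comm-dendr-1}--\eqref{eq:comm-dendr-2}. If they do not cancel outright, I would first test whether \eqref{eq:post-cond-3} is meant to be read with the full product $\circ$ (that is, $(y\circ z)\diamond x$), in analogy with post-Poisson algebras. That reading would absorb $(y\vartriangleright z)\diamond x+(z\vartriangleright y)\diamond x$ together with $(y\cdot z)\diamond x$ in one step. The correction terms $-y\cdot(z\diamond x)$ from \eqref{eq:post-cond-2} should then cancel against the $\cdot$-parts $y\cdot(x\diamond z)$ and $(x\diamond y)\cdot z$ of the right side. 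I would complete the proof by listing all roughly two dozen monomials in a table and verifying that each cancels.
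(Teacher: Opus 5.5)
Your plan coincides with the paper's proof. You expand $\{x,y\circ z\}$, rewrite it using \eqref{eq:post-cond-4}, \eqref{eq:post-cond-1}, \eqref{eq:post-cond-2} and the Leibniz rule of $[\cdot,\cdot]$, and regroup the $y\vartriangleright$, $z\vartriangleright$, $y\cdot$ and $\cdot\,z$ terms into $y\circ\{x,z\}+\{x,y\}\circ z$.

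Your fallback reading is exactly what the paper does, and it is required rather than optional. The paper replaces all three terms $-(y\vartriangleright z)\diamond x-(z\vartriangleright y)\diamond x-(y\cdot z)\diamond x$ at once by $-z\vartriangleright(y\diamond x)-y\vartriangleright(z\diamond x)$, i.e.\ it reads \eqref{eq:post-cond-3} as $(y\circ z)\diamond x=\dots$. Under the literal reading, $(y\vartriangleright z)\diamond x+(z\vartriangleright y)\diamond x$ is left over and does not cancel in general, because the $-z\vartriangleright(y\diamond x)$-type terms on the right side are already matched by \eqref{eq:post-cond-3}.
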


\begin{proof}
Proposition \ref{prop:comm-dendr-to-ass} guarantees that $(A,\cdot)$ is a commutative associative algebra. It remains to verify the Malcev identity \eqref{eq:malpoisalg} for $\{\cdot,\cdot\}$. A direct computation yields:
\begin{align*}
\{x,y\cdot z\} &= \{x,\, y\vartriangleright z + z\vartriangleright y + y\cdot z\}\\
&= x\diamond(y\vartriangleright z) + x\diamond(z\vartriangleright y) + x\diamond(y\cdot z)\\
&\quad - (y\vartriangleright z)\diamond x - (z\vartriangleright y)\diamond x - (y\cdot z)\diamond x\\
&\quad + [x,y\vartriangleright z] + [x,z\vartriangleright y] + [x,y\cdot z]\\
&= y\vartriangleright(x\diamond z) + \{x,y\}\vartriangleright z + z\vartriangleright(x\diamond y) + \{x,z\}\vartriangleright y\\
&\quad + (x\diamond y)\cdot z + y\cdot(x\diamond z) - z\vartriangleright(y\diamond x) - y\vartriangleright(z\diamond x)\\
&\quad + y\vartriangleright[x,z] - z\cdot(y\diamond x) + z\vartriangleright[x,y] - y\cdot(z\diamond x)\\
&\quad + [x,y]\cdot z + y\cdot[x,z]\\
&= y\vartriangleright\big(x\diamond z - z\diamond x + [x,z]\big) + \{x,z\}\vartriangleright y\\
&\quad + y\cdot\big(x\diamond z - z\diamond x + [x,z]\big) + \{x,y\}\vartriangleright z\\
&\quad + z\vartriangleright\big(x\diamond y - y\diamond x + [x,y]\big) + \big(x\diamond y - y\diamond x + [x,y]\big)\cdot z\\
&= y\vartriangleright\{x,z\} + \{x,z\}\vartriangleright y + y\cdot\{x,z\} + \{x,y\}\vartriangleright z + z\vartriangleright\{x,y\} + \{x,y\}\cdot z\\
&= y\cdot\{x,z\} + \{x,y\}\cdot z.
\end{align*}
Thus the Malcev identity holds, confirming that $(A,\cdot,\{\cdot,\cdot\})$ is an almost Poisson algebra.
\end{proof}

\begin{prop}
Let $(A,[\cdot,\cdot],\diamond,\cdot,\vartriangleright)$ be an almost tridendriform  Poisson algebra. Define $\mathcal{L}_\diamond,\mathcal{L}_\vartriangleright:A\to\operatorname{End}(A)$ by $\mathcal{L}_\diamond(x)y = x\diamond y$ and $\mathcal{L}_\vartriangleright(x)y = x\vartriangleright y$ for all $x,y\in A$. Then $(A,[\cdot,\cdot],\cdot,\mathcal{L}_\diamond,\mathcal{L}_\vartriangleright)$ is an $A$-module almost Poisson algebra of its associated almost Poisson algebra $(A^C,\cdot,\{\cdot,\cdot\})$.
\end{prop}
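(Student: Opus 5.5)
Write $\ast$ for the associated commutative product $x\ast y=x\vartriangleright y+y\vartriangleright x+x\cdot y$ of \eqref{eq:induced-ops}, to keep it apart from the original $\cdot$. The plan is to check the axioms of an $A$-module almost Poisson algebra one by one. The module is $V=A$ with $[\cdot,\cdot]_V=[\cdot,\cdot]$ and $\cdot_V=\cdot$. The acting algebra is $A^C=(A,\ast,\{\cdot,\cdot\})$, with $\mu=\mathcal{L}_\vartriangleright$ and $\varrho=\mathcal{L}_\diamond$. The statement lists the maps as $(\mathcal{L}_\diamond,\mathcal{L}_\vartriangleright)$, which matches the ordering $(V,\cdot_V,[\cdot,\cdot]_V,\mu,\varrho)$ only up to swapping $\varrho$ and $\mu$. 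I will use the natural assignment $\mu=\mathcal{L}_\vartriangleright$, $\varrho=\mathcal{L}_\diamond$.

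\textbf{Step 1 (associative part).} Proposition \ref{prop:comm-dendr-to-ass} already gives that $(A,\cdot,\mathcal{L}_\vartriangleright)$ is an $A^C$-module commutative associative algebra. Concretely, \eqref{eq:comm-dendr-1} says $\mathcal{L}_\vartriangleright(x\ast y)=\mathcal{L}_\vartriangleright(x)\mathcal{L}_\vartriangleright(y)$, and \eqref{eq:comm-dendr-2} is exactly \eqref{eq:module-com-ass}. We also know $(A,[\cdot,\cdot],\cdot)$ is an almost Poisson algebra by hypothesis, so only the mixed conditions remain.

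\textbf{Step 2 (representation conditions).} Condition \eqref{eq:repmalcpoisalg1} reads $(x\ast y)\diamond z=y\vartriangleright(x\diamond z)+x\vartriangleright(y\diamond z)$. Expanding $x\ast y$ turns this into
\begin{equation*}
(x\vartriangleright y)\diamond z+(y\vartriangleright x)\diamond z+(x\cdot y)\diamond z .
\end{equation*}
Axiom \eqref{eq:post-cond-3} supplies the $\cdot$ term. Condition \eqref{eq:repmalcpoisalg2} reads $\{x,y\}\vartriangleright z=x\diamond(y\vartriangleright z)-y\vartriangleright(x\diamond z)$, which is exactly \eqref{eq:post-cond-4}.

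\textbf{Step 3 (module compatibilities).} Condition \eqref{eq:module-malpoi-1} is precisely \eqref{eq:post-cond-1}. Condition \eqref{eq:module-malpoi-3} reads $[a,x\vartriangleright b]=-(x\diamond a)\cdot b+x\vartriangleright[a,b]$, which is \eqref{eq:post-cond-2} with $z=x$, $y=b$.

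\textbf{Main obstacle.} The difficulty is in Step 2: the two extra terms $(x\vartriangleright y)\diamond z+(y\vartriangleright x)\diamond z$ are not controlled by any listed axiom. I would first try to show they vanish, or cancel against hidden terms, by combining \eqref{eq:post-cond-3} with \eqref{eq:post-cond-4} and the dendriform identities. If that fails, the natural fix is to read the definition's intended axiom \eqref{eq:post-cond-3} with $y\ast z$ in place of $y\cdot z$ on the left. This is what makes the theorem's computation for $\{x,y\ast z\}$ work, and under that reading Step 2 is immediate. I would adopt this reading, note it explicitly, and then the proof is a direct term-by-term matching.
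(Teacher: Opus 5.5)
Your proof is correct and is essentially the paper's argument. Both check the axioms of an $A$-module almost Poisson algebra one at a time: Proposition~\ref{prop:comm-dendr-to-ass} handles the associative part, and \eqref{eq:post-cond-1}, \eqref{eq:post-cond-2} give \eqref{eq:module-malpoi-1}, \eqref{eq:module-malpoi-3}. Yours is in fact more complete, because the paper settles the representation conditions \eqref{eq:repmalcpoisalg1}--\eqref{eq:repmalcpoisalg2} only by citing an ``$A^C$-module Malcev algebra'' fact, whereas you derive them directly from \eqref{eq:post-cond-3} and \eqref{eq:post-cond-4}.

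Reading \eqref{eq:post-cond-3} with the induced product $\ast$ on the left is not a repair but the paper's intended convention. You already used the same convention silently for \eqref{eq:comm-dendr-1} in Step 1, and the paper expands $(y\cdot z)\diamond x$ exactly this way in the proofs of the associated-algebra theorem and of Theorem~\ref{thm:kup-to-post}. So drop the attempt to make $(x\vartriangleright y)\diamond z+(y\vartriangleright x)\diamond z$ vanish; it fails in general.
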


\begin{proof}
From Proposition \ref{prop:comm-dendr-to-ass}, $(A,\cdot,\mathcal{L}_\vartriangleright)$ is an $A^C$-module commutative associative algebra, and it is known that $(A,[\cdot,\cdot],\mathcal{L}_\diamond)$ is an $A^C$-module Malcev algebra. Axiom \eqref{eq:post-cond-1} translates directly into condition \eqref{eq:module-malpoi-1}, while \eqref{eq:post-cond-2} yields condition \eqref{eq:module-malpoi-3}. Hence the required structure is obtained.
\end{proof}

We now introduce the concept of a weighted relative Rota-Baxter operator for almost Poisson algebras, generalizing the weighted Rota–Baxter operators.

\begin{defn}
Let $(A,\cdot)$ be a commutative associative algebra and $(V,\cdot_V,\mu)$ an $A$-module commutative associative algebra. A linear map $\mathcal{R}:V\to A$ is called a \textit{weighted relative Rota-Baxter operator of weight $\lambda\in\mathbb{K}$} on $A$ with respect to $(V,\cdot_V,\mu)$ if
\begin{equation}\label{eq:kup-ass}
\mathcal{R}(a)\cdot\mathcal{R}(b) = \mathcal{R}\big(\mu(\mathcal{R}(a))b + \mu(\mathcal{R}(b))a + \lambda\,a\cdot_V b\big),\qquad \forall a,b\in V.
\end{equation}
\end{defn}

\begin{ex}
When $(V,\cdot_V,\mu) = (A,\cdot,L_\cdot)$, equation \eqref{eq:kup-ass} reduces to the weighted Rota–Baxter identity
\begin{equation}\label{eq:rba-ass}
\mathcal{R}(x)\cdot\mathcal{R}(y) = \mathcal{R}\big(\mathcal{R}(x)\cdot y + x\cdot\mathcal{R}(y) + \lambda x\cdot y\big),\quad x,y\in A.
\end{equation}
Thus $\mathcal{R}:A\to A$ is a weighted Rota–Baxter operator of weight $\lambda$ on the commutative associative algebra $(A,\cdot)$.
\end{ex}

\begin{lem}\label{lem:kup-to-dendr}
Let $(A,\cdot)$ be a commutative associative algebra and $(V,\cdot_V,\mu)$ an $A$-module commutative associative algebra. If $\mathcal{R}:V\to A$ is a weighted relative Rota-Baxter operator of weight $\lambda$ with respect to $(V,\cdot_V,\mu)$, then $V$ carries a commutative dendriform trialgebra structure defined for all $a,b\in V$ by
\begin{equation}\label{eq:dendr-from-kup}
a\cdot b = \lambda\,a\cdot_V b,\qquad a\vartriangleright b = \mu(\mathcal{R}(a))b.
\end{equation}
\end{lem}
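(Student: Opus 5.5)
Three things need checking for the operations $a\cdot b=\lambda\,a\cdot_V b$ and $a\vartriangleright b=\mu(\mathcal{R}(a))b$ of \eqref{eq:dendr-from-kup}. First, the product $a\cdot b$ is commutative and associative. Second, identity \eqref{eq:comm-dendr-1} holds. Third, identity \eqref{eq:comm-dendr-2} holds. We use only three inputs: $(V,\cdot_V)$ is commutative associative, $(V,\mu)$ is a representation of $(A,\cdot)$, and the module condition \eqref{eq:module-com-ass}, together with the Rota--Baxter identity \eqref{eq:kup-ass}.

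The first point is immediate, since scaling a commutative associative product by $\lambda$ preserves both properties: $(\lambda a\cdot_V b)\lambda\cdot_V c=\lambda^2(a\cdot_V b)\cdot_V c$.

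For \eqref{eq:comm-dendr-2} we compute
\[
(a\vartriangleright b)\cdot c=\lambda\,(\mu(\mathcal{R}(a))b)\cdot_V c=\lambda\,\mu(\mathcal{R}(a))(b\cdot_V c)=a\vartriangleright(b\cdot c),
\]
using \eqref{eq:module-com-ass} directly. Since everything is commutative, the mirrored version $a\cdot(b\vartriangleright c)$ also follows by commutativity of $\cdot_V$ together with \eqref{eq:module-com-ass}, if it is needed.

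The main step is \eqref{eq:comm-dendr-1}, read with the product $\cdot$ on the left replaced by the induced product $a\circ b=a\vartriangleright b+b\vartriangleright a+a\cdot b$ of \eqref{eq:sub-adjacent-ass}. This is the only reading consistent with the weight-$\lambda$ theory. (Literally, $(a\cdot b)\vartriangleright c=\lambda\mu(\mathcal{R}(a\cdot_V b))c$ need not equal $\mu(\mathcal{R}(a))\mu(\mathcal{R}(b))c$. I would flag this and prove the identity in the form in which the later theorems use it.) With that reading we get
\[
(a\circ b)\vartriangleright c=\mu\big(\mathcal{R}(\mu(\mathcal{R}(a))b+\mu(\mathcal{R}(b))a+\lambda a\cdot_V b)\big)c=\mu(\mathcal{R}(a)\cdot\mathcal{R}(b))c ,
\]
where the second equality is \eqref{eq:kup-ass}. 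Because $\mu$ is a representation, the right-hand side equals $\mu(\mathcal{R}(a))\mu(\mathcal{R}(b))c=a\vartriangleright(b\vartriangleright c)$. The obstacle here is purely interpretive, namely identifying which product appears on the left of \eqref{eq:comm-dendr-1}; once that is fixed, the computation is a single application of \eqref{eq:kup-ass}.

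Finally, I would note as a consequence that $\mathcal{R}$ is a homomorphism from $(V,\circ)$ to $(A,\cdot)$, since $\mathcal{R}(a\circ b)=\mathcal{R}(a)\cdot\mathcal{R}(b)$ is exactly \eqref{eq:kup-ass}. This is consistent with Proposition~\ref{prop:comm-dendr-to-ass}.
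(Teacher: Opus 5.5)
Your proof is correct. The paper states this lemma without proof, but your verification of \eqref{eq:comm-dendr-2} via \eqref{eq:module-com-ass}, and of \eqref{eq:comm-dendr-1} via \eqref{eq:kup-ass} together with $\mu(x\cdot y)=\mu(x)\mu(y)$, is exactly the computation the paper carries out for \eqref{eq:post-cond-3} in the proof of Theorem~\ref{thm:kup-to-post}. You are also right to read the left-hand side of \eqref{eq:comm-dendr-1} as the sub-adjacent product $a\vartriangleright b+b\vartriangleright a+a\cdot b$: the paper itself adopts this reading (cf.\ \eqref{eq:induced-ops} and that same proof), and the literal reading fails in general, e.g.\ for $\lambda=0$.
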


\begin{defn}
Let $(A,[\cdot,\cdot],\cdot)$ be an almost Poisson algebra and $(V,[\cdot,\cdot]_V,\cdot_V,\varrho,\mu)$ an $A$-module almost Poisson algebra. A linear map $\mathcal{R}:V\to A$ is called a \textit{weighted relative Rota-Baxter operator of weight $\lambda\in\mathbb{K}$} with respect to $(V,[\cdot,\cdot]_V,\cdot_V,\varrho,\mu)$ if $\mathcal{R}$ satisfies \eqref{eq:kup-ass} (with the same $\mu$) together with the additional condition
\begin{equation}\label{eq:kup-lie}
[\mathcal{R}(a),\mathcal{R}(b)] = \mathcal{R}\big(\varrho(\mathcal{R}(a))b - \varrho(\mathcal{R}(b))a + \lambda\,[a,b]_V\big),\qquad \forall a,b\in V.
\end{equation}
\end{defn}

\begin{ex}
When $(V,[\cdot,\cdot]_V,\cdot_V,\varrho,\mu) = (A,[\cdot,\cdot],\cdot,\operatorname{ad}_{[,]},L_\cdot)$, the above definition yields a \textit{weighted Rota–Baxter operator of weight $\lambda$} on the almost Poisson algebra $(A,[\cdot,\cdot],\cdot)$.
\end{ex}

\begin{thm}\label{thm:kup-to-post}
Let $(A,\cdot,[\cdot,\cdot])$ be an almost Poisson algebra and $(V,[\cdot,\cdot]_V,\cdot_V,\varrho,\mu)$ an $A$-module almost Poisson algebra. Suppose $\mathcal{R}:V\to A$ is a weighted relative Rota-Baxter operator of weight $\lambda$ with respect to $(V,[\cdot,\cdot]_V,\cdot_V,\varrho,\mu)$. Define new operations on $V$ for all $a,b\in V$ by
\begin{align}
\{a,b\} &= \lambda[a,b]_V, &
a\diamond b &= \varrho(\mathcal{R}(a))b,\nonumber\\
a\cdot b &= \lambda a\cdot_V b, &
a\vartriangleright b &= \mu(\mathcal{R}(a))b.\label{eq:ops-from-kup}
\end{align}
Then $(V,\{\cdot,\cdot\},\diamond,\cdot,\vartriangleright)$ is an almost tridendriform  Poisson algebra. Moreover, $\mathcal{R}$ becomes a homomorphism of almost Poisson algebras from the associated almost Poisson algebra $V^c$ (of this almost tridendriform  Poisson algebra) to the original $(A,\cdot,[\cdot,\cdot])$.
\end{thm}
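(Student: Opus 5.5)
The plan is to check the axioms of an almost tridendriform Poisson algebra one group at a time. Each axiom should follow directly from the axioms of the $A$-module almost Poisson algebra $(V,[\cdot,\cdot]_V,\cdot_V,\varrho,\mu)$ together with the two Rota-Baxter identities \eqref{eq:kup-ass} and \eqref{eq:kup-lie}.

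\textbf{Step 1: the trialgebra part.} By Lemma~\ref{lem:kup-to-dendr}, $(V,\cdot,\vartriangleright)$ with $a\cdot b=\lambda a\cdot_V b$ and $a\vartriangleright b=\mu(\mathcal{R}(a))b$ is a commutative dendriform trialgebra. Also, $(V,\{\cdot,\cdot\},\cdot)=(V,\lambda[\cdot,\cdot]_V,\lambda\cdot_V)$ is an almost Poisson algebra. Indeed, rescaling both operations by $\lambda$ preserves commutativity, associativity and skew-symmetry. It also preserves the Leibniz rule, since both sides of the rule scale by $\lambda^2$.

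\textbf{Step 2: the two $\mathcal{R}$-free axioms.} Conditions \eqref{eq:post-cond-1} and \eqref{eq:post-cond-2} only involve $\mathcal{R}$ through a single element, so they reduce to the module axioms.
\begin{itemize}
\item For \eqref{eq:post-cond-1}, we need $\varrho(\mathcal{R}(x))(\lambda y\cdot_V z)=\lambda(\varrho(\mathcal{R}(x))y)\cdot_V z+\lambda y\cdot_V(\varrho(\mathcal{R}(x))z)$. This is $\lambda$ times \eqref{eq:module-malpoi-1}, applied with the element $\mathcal{R}(x)\in A$.
\item For \eqref{eq:post-cond-2}, we need $\lambda[x,\mu(\mathcal{R}(z))y]_V=\lambda\mu(\mathcal{R}(z))[x,y]_V-\lambda y\cdot_V(\varrho(\mathcal{R}(z))x)$. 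This is $\lambda$ times \eqref{eq:module-malpoi-3}, after using commutativity of $\cdot_V$.
\end{itemize}

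\textbf{Step 3: the two axioms that need the Rota-Baxter identities.} Here the combined operations of \eqref{eq:induced-ops} appear inside $\mathcal{R}$.
\begin{itemize}
\item For \eqref{eq:post-cond-3}, the left side is $\varrho(\mathcal{R}(\lambda y\cdot_V z))x$ and the right side is $\mu(\mathcal{R}(z))\varrho(\mathcal{R}(y))x+\mu(\mathcal{R}(y))\varrho(\mathcal{R}(z))x$. By \eqref{eq:repmalcpoisalg1}, the right side equals $\varrho(\mathcal{R}(y)\cdot\mathcal{R}(z))x$. By \eqref{eq:kup-ass}, $\mathcal{R}(y)\cdot\mathcal{R}(z)=\mathcal{R}(y\vartriangleright z+z\vartriangleright y+\lambda y\cdot_V z)$.
\item For \eqref{eq:post-cond-4}, the right side is $\varrho(\mathcal{R}(x))\mu(\mathcal{R}(z))y-\mu(\mathcal{R}(z))\varrho(\mathcal{R}(x))y$. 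By \eqref{eq:repmalcpoisalg2}, this equals $\mu([\mathcal{R}(x),\mathcal{R}(z)])y$. By \eqref{eq:kup-lie}, that is $\mu(\mathcal{R}(\{x,z\}_{\mathrm{tot}}))y$, where $\{x,z\}_{\mathrm{tot}}=x\diamond z-z\diamond x+\lambda[x,z]_V$.
\end{itemize}

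\textbf{Main obstacle: reading the definition correctly.} In the paper's definition, the symbols $\cdot$ and $\{\cdot,\cdot\}$ are overloaded. The primitive operations on $V$ are $\lambda\cdot_V$ and $\lambda[\cdot,\cdot]_V$. The operations in \eqref{eq:induced-ops} are the total ones: the commutative product $a\vartriangleright b+b\vartriangleright a+\lambda a\cdot_V b$ and the bracket $\{x,z\}_{\mathrm{tot}}$ above. With this reading, \eqref{eq:post-cond-4} becomes exactly the statement of the last bullet, and \eqref{eq:post-cond-3} must be read with the total product on its left side.

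Under the literal reading, \eqref{eq:post-cond-3} has the primitive product $\lambda y\cdot_V z$ on its left. It would then only match the right side up to the $\vartriangleright$ terms. So I would adopt the total-operation reading, which is the one consistent with the associated algebra $V^c$ and with how the paper proves that $A^C$ is almost Poisson.

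\textbf{Step 4: the homomorphism.} With the total operations on $V^c$, \eqref{eq:kup-ass} reads literally $\mathcal{R}(a)\cdot\mathcal{R}(b)=\mathcal{R}(a\vartriangleright b+b\vartriangleright a+\lambda a\cdot_V b)$. Likewise \eqref{eq:kup-lie} reads $[\mathcal{R}(a),\mathcal{R}(b)]=\mathcal{R}(a\diamond b-b\diamond a+\lambda[a,b]_V)$. Together these say exactly that $\mathcal{R}:V^c\to A$ is a homomorphism of almost Poisson algebras.
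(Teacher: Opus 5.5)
Your proposal is correct and follows the paper's proof essentially step for step. You use Lemma~\ref{lem:kup-to-dendr} for the trialgebra part, the module axioms \eqref{eq:module-malpoi-1} and \eqref{eq:module-malpoi-3} for \eqref{eq:post-cond-1}--\eqref{eq:post-cond-2}, the representation identities \eqref{eq:repmalcpoisalg1}--\eqref{eq:repmalcpoisalg2} combined with \eqref{eq:kup-ass} and \eqref{eq:kup-lie} for \eqref{eq:post-cond-3}--\eqref{eq:post-cond-4}, and \eqref{eq:kup-ass}--\eqref{eq:kup-lie} directly for the homomorphism. Your reading of the overloaded symbols (primitive operations in \eqref{eq:post-cond-1}--\eqref{eq:post-cond-2}, total operations on the left of \eqref{eq:post-cond-3}--\eqref{eq:post-cond-4}) is exactly what the paper's computation uses implicitly, and your rescaling check that $(V,\lambda[\cdot,\cdot]_V,\lambda\cdot_V)$ is almost Poisson supplies a small step the paper leaves unstated.
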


\begin{proof}
Lemma \ref{lem:kup-to-dendr} shows that $(V,\cdot,\vartriangleright)$ is a commutative dendriform trialgebra. We now verify the four compatibility conditions using the module axioms.

For \eqref{eq:post-cond-1}:
\begin{align*}
a\diamond(b\cdot c) &= \lambda\,\varrho(\mathcal{R}(a))(b\cdot_V c) \\
&= \lambda\big((\varrho(\mathcal{R}(a))b)\cdot_V c + b\cdot_V(\varrho(\mathcal{R}(a))c)\big) \quad\text{(by \eqref{eq:module-malpoi-1})}\\
&= (a\diamond b)\cdot c + b\cdot(a\diamond c).
\end{align*}

For \eqref{eq:post-cond-2}:
\begin{align*}
\{a,c\vartriangleright b\} &= \lambda[a,\mu(\mathcal{R}(c))b]_V \\
&= -\lambda(\varrho(\mathcal{R}(c))a)\cdot_V b + \lambda\mu(\mathcal{R}(c))[a,b]_V \quad\text{(by \eqref{eq:module-malpoi-3})}\\
&= -(c\diamond a)\cdot b + c\vartriangleright\{a,b\}.
\end{align*}

For \eqref{eq:post-cond-3}, using \eqref{eq:kup-ass} and the module property:
\begin{align*}
(b\cdot c)\diamond a &= (b\vartriangleright c + c\vartriangleright b + b\cdot c)\diamond a \\
&= \varrho\big(\mathcal{R}(\mu(\mathcal{R}(b))c + \mu(\mathcal{R}(c))b + \lambda b\cdot_V c)\big)a \\
&= \varrho(\mathcal{R}(b)\cdot\mathcal{R}(c))a \\
&= \mu(\mathcal{R}(c))\varrho(\mathcal{R}(b))a + \mu(\mathcal{R}(b))\varrho(\mathcal{R}(c))a \\
&= c\vartriangleright(b\diamond a) + b\vartriangleright(c\diamond a).
\end{align*}

For \eqref{eq:post-cond-4}, using \eqref{eq:kup-lie}:
\begin{align*}
\{a,c\}\vartriangleright b &= (a\diamond c - c\diamond a + [a,c])\vartriangleright b \\
&= \mu\big(\mathcal{R}(\varrho(\mathcal{R}(a))c - \varrho(\mathcal{R}(c))a + \lambda[a,c]_V)\big)b \\
&= \mu([\mathcal{R}(a),\mathcal{R}(c)])b \\
&= \varrho(\mathcal{R}(a))\mu(\mathcal{R}(c))b - \mu(\mathcal{R}(c))\varrho(\mathcal{R}(a))b \\
&= a\diamond(c\vartriangleright b) - c\vartriangleright(a\diamond b).
\end{align*}

Thus all conditions are satisfied, making $(V,\{\cdot,\cdot\},\diamond,\cdot,\vartriangleright)$ an almost tridendriform  Poisson algebra. Finally, the definitions \eqref{eq:ops-from-kup} together with \eqref{eq:kup-ass} and \eqref{eq:kup-lie} imply directly that $\mathcal{R}$ preserves both products, hence is a homomorphism of almost Poisson algebras from $V^c$ to $A$.
\end{proof}

\begin{ex}
Let $(A,\cdot,[\cdot,\cdot])$ be an almost Poisson algebra and $\mathcal{R}:A\to A$ a weighted Rota–Baxter operator of weight $\lambda$. Define new operations on $A$ for all $x,y\in A$ by
\begin{equation}\label{eq:rba-to-post}
\{x,y\} = \lambda[x,y],\quad x\diamond y = [\mathcal{R}(x),y],\quad x\cdot y = \lambda x\cdot y,\quad x\vartriangleright y = \mathcal{R}(x)\cdot y.
\end{equation}
Then $(A,\{\cdot,\cdot\},\diamond,\cdot,\vartriangleright)$ is an almost tridendriform  Poisson algebra, and $\mathcal{R}$ is a homomorphism from its associated almost Poisson algebra $A^c$ to the original $(A,\cdot,[\cdot,\cdot])$.
\end{ex}
\section{Embedding of Almost Poisson Algebras into \textsf{AWB}}\label{Section5}
 
In this section, we recall the notion of algebras with bracket (\textsf{AWB}), originally introduced and studied in \cite{casas}. We review their representations and characterize them via semi-direct products. We then show that an algebra with bracket can be viewed as a noncommutative generalization of an almost Poisson algebra through duplication by relative averaging operators.
 
\begin{defn}
A \emph{representation} of an associative algebra $(A, \cdot)$ is a vector space $V$ equipped with two linear maps $l, r: A \to End(V)$ such that
\begin{align*}
l(x \cdot y) &= l(x) l(y), \quad r(x \cdot y) = r(y) r(x), \quad l(x) r(y) = r(y) l(x)
\end{align*}
for all $x, y \in A$. Here, $(V, l, r)$ is also called an associative $A$-bimodule. If moreover $l = r$, we obtain a commutative associative $A$-module, which we denote by $(V, \mu)$.
\end{defn}
 
Note that the tuple $(V, l, r)$ is a representation of the associative algebra $(A, \cdot)$ if and only if $A \oplus V$ carries an associative structure given by
\begin{align}\label{sdpassociative}
(x + u) \cdot_{A \oplus V} (y + v) &= x \cdot y + l(x) v + r(y) u,
\end{align}
for all $x, y \in A$ and $u, v \in V$. This structure is called the \emph{semi-direct product} of $A$ with $V$.
 
In \cite{casas}, the authors introduced the notion of a (right) \textsf{AWB}. In this paper, we focus on the left case.
 
\begin{defn}
An \textsf{AWB} is a triple $(A, \cdot, \{\cdot, \cdot\})$, where $(A, \cdot)$ is an associative algebra equipped with a bracket $\{\cdot, \cdot\}$ such that the following identity holds:
\begin{align}\label{comleibpoisson}
\{x, y \cdot z\} = \{x, y\} \cdot z + y \cdot \{x, z\}, \quad \forall x, y, z \in A.
\end{align}
\end{defn}
 
If $(A, \cdot, \{\cdot, \cdot\})$ is a left \textsf{AWB}, then $(A, \cdot, \{\cdot, \cdot\}^{\mathrm{op}})$ is a right \textsf{AWB} (see \cite{casas}), where $\{x, y\}^{\mathrm{op}} = \{y, x\}$ for all $x, y \in A$.
 \begin{ex}
Let $A$ be a two-dimensional vector space with basis $\{e_1,e_2\}$. Define a bilinear product $\cdot$ on $A$ by
\begin{align*}
e_1 \cdot e_1 &= \alpha e_1, \\
e_1 \cdot e_2 &= \alpha e_2, \\
e_2 \cdot e_1 &= \beta e_1, \\
e_2 \cdot e_2 &= \beta e_2,
\end{align*}
and define a bracket $\{\cdot,\cdot\}$ on $A$ by
\begin{align*}
\{e_1,e_1\} &= \gamma e_1 - \frac{\alpha \gamma}{\beta} e_2,\\
\{e_1,e_2\} &= \nu e_1 - \frac{\alpha \nu}{\beta} e_2,\\
\{e_2,e_1\} &= \frac{\beta \gamma}{\alpha} e_1 - \gamma e_2,\\
\{e_2,e_2\} &= \frac{\beta \nu}{\alpha} e_1 - \nu e_2,
\end{align*}
where $\alpha\neq 0, \beta,\gamma$ and $\nu$ are parameters.  Then $(A,\cdot,\{\cdot,\cdot\})$ is an {\sf AWB}.
\end{ex}
\begin{re}
In an \textsf{AWB}, if the bracket is skew-symmetric and the associative structure is commutative, we recover the notion of an almost Poisson algebra. An almost Poisson algebra is a generalization of a Poisson algebra. We denote the categories of Poisson algebras, almost Poisson algebras, \textsf{AWB}s, and right algebras with bracket by \textsf{\bf PA}, \textsf{\bf APA}, \textsf{\bf AWB}, and \textsf{\bf rAWB}, respectively. Then we have the chain of embeddings
\[
\text{\textsf{\bf PA}} \hookrightarrow \text{\textsf{\bf APA}} \hookrightarrow \text{\textsf{\bf AWB}} \cong \text{\textsf{\bf rAWB}}.
\]
\end{re}
 
\begin{ex}
Let $A$ be a $2$-dimensional vector space with basis $\mathcal{B} = \{e_1, e_2\}$. Then $(A, \cdot, \{\cdot, \cdot\})$ is an \textsf{AWB}, where the multiplication $\cdot$ and bracket $\{\cdot, \cdot\}$ are defined by
\begin{align*}
e_1 \cdot e_1 &= e_1, \quad e_1 \cdot e_2 = e_2, \quad e_2 \cdot e_1 = 0, \quad e_2 \cdot e_2 = 0, \\
\{e_1, e_1\} &= 0, \quad \{e_1, e_2\} = e_2, \quad \{e_2, e_1\} = -e_2, \quad \{e_2, e_2\} = 0.
\end{align*}
\end{ex}
 
\begin{defn}
A subalgebra $S \subseteq A$ is called an \emph{algebra with bracket subalgebra} if:
\begin{enumerate}
\item $S$ is closed under the associative multiplication (i.e., if $x, y \in S$, then $x \cdot y \in S$);
\item $S$ is closed under the bracket (i.e., if $x, y \in S$, then $\{x, y\} \in S$).
\end{enumerate}
\end{defn}
 
\begin{defn}
Let $(A, \cdot, \{\cdot, \cdot\})$ be an \textsf{AWB}. The tuple $(V, l, r, L, R)$ is said to be a \emph{representation} of $A$ (or an $A$-module) if $(V, l, r)$ is a representation of the associative algebra $(A, \cdot)$ and $L, R: A \to End(V)$ are two linear maps satisfying
\begin{align}
L(x) l(y) &= l(\{x, y\}) + l(y) L(x), \label{cond rep1} \\
L(x) r(y) &= r(y) L(x) + r(\{x, y\}), \label{cond rep2} \\
R(x \cdot y) &= r(y) R(x) + l(x) R(y), \quad \forall x, y \in A. \label{cond rep3}
\end{align}
\end{defn}
 
\begin{prop}
Let $(A, \cdot, \{\cdot, \cdot\})$ be an \textsf{AWB}, $V$ a vector space, and $l, r, L, R: A \to  End(V)$ linear maps. Then $(V, l, r, L, R)$ is a representation of the \textsf{AWB} $(A, \cdot, \{\cdot, \cdot\})$ if and only if $(A \oplus V, \cdot_{A \oplus V}, \{\cdot, \cdot\}_{A \oplus V})$ is an \textsf{AWB}, where $\cdot_{A \oplus V}$ is given in \eqref{sdpassociative} and the bracket $\{\cdot, \cdot\}_{A \oplus V}$ is defined by
\begin{align}\label{sdpLeibniz}
\{x + u, y + v\}_{A \oplus V} &= \{x, y\} + L(x) v + R(y) u.
\end{align}
\end{prop}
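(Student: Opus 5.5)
The plan is to split the equivalence into its associative part and its bracket part and handle each by direct expansion on elements $x+u,\ y+v,\ z+w$ of $A\oplus V$. The associative part needs no new work. By the remark preceding the definition of an \textsf{AWB}, $(A\oplus V,\cdot_{A\oplus V})$ is associative if and only if $(V,l,r)$ is a representation of $(A,\cdot)$. So it remains to show that, given associativity, the Leibniz-type identity \eqref{comleibpoisson} on $A\oplus V$ is equivalent to \eqref{cond rep1}--\eqref{cond rep3}.

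For the bracket part, expand
\[
\{x+u,(y+v)\cdot(z+w)\}_{A\oplus V}-\{x+u,y+v\}_{A\oplus V}\cdot(z+w)-(y+v)\cdot\{x+u,z+w\}_{A\oplus V}.
\]
Using \eqref{sdpassociative} and \eqref{sdpLeibniz}, the first term is
\[
\{x,y\cdot z\}+L(x)l(y)w+L(x)r(z)v+R(y\cdot z)u.
\]
The second term is
\[
\{x,y\}\cdot z+l(\{x,y\})w+r(z)L(x)v+r(z)R(y)u.
\]
The third term is
\[
y\cdot\{x,z\}+l(y)L(x)w+l(y)R(z)u+r(\{x,z\})v.
\]
The $A$-component of the difference vanishes because $A$ is an \textsf{AWB}. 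The $V$-component then splits by linearity into three independent pieces, one for each of $u$, $v$, $w$.

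Each piece corresponds to one condition. The coefficient of $w$ is $L(x)l(y)-l(\{x,y\})-l(y)L(x)$, which gives \eqref{cond rep1}. The coefficient of $v$ is $L(x)r(z)-r(z)L(x)-r(\{x,z\})$, which gives \eqref{cond rep2}. The coefficient of $u$ is $R(y\cdot z)-r(z)R(y)-l(y)R(z)$, which gives \eqref{cond rep3}.

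For the converse, specialize the Leibniz identity on $A\oplus V$. Take only one of $u,v,w$ nonzero, with arbitrary $x,y,z\in A$. This isolates each of the three operator identities, which proves necessity. Sufficiency follows from the same computation read backwards.

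The main obstacle is purely bookkeeping, namely getting the left/right actions in the correct order in each product. For instance, $(y+v)\cdot(z+w)$ contributes $l(y)w+r(z)v$ and not the reverse. A mismatch here would pair the $v$-term with the wrong condition. I would check this carefully against \eqref{sdpassociative} before collecting terms.
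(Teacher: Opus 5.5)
Your proposal is correct and follows the same route as the paper. Both arguments expand $\{x+u,(y+v)\cdot(z+w)\}_{A\oplus V}-\{x+u,y+v\}_{A\oplus V}\cdot(z+w)-(y+v)\cdot\{x+u,z+w\}_{A\oplus V}$ via \eqref{sdpassociative} and \eqref{sdpLeibniz}, cancel the $A$-part using the \textsf{AWB} identity on $A$, and identify the remaining $V$-part with the representation conditions. Your split into the $u$-, $v$- and $w$-coefficients (matching \eqref{cond rep3}, \eqref{cond rep2}, \eqref{cond rep1} respectively), together with the bimodule remark for the associative part, just makes the ``only if'' direction more explicit than the paper's write-up.
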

 
\begin{proof}
According to \eqref{sdpassociative} (respectively, \eqref{sdpLeibniz}), $(A \oplus V, \cdot_{A \oplus V})$ is an associative algebra (respectively, $(A \oplus V, \{\cdot, \cdot\}_{A \oplus V})$ satisfies the bracket properties). It remains to verify the defining identity \eqref{comleibpoisson}. For any $x, y, z \in A$ and $u, v, w \in V$, we compute
\begin{align*}
&\{x + u, (y + v) \cdot_{A \oplus V} (z + w)\}_{A \oplus V} - \{x + u, y + v\}_{A \oplus V} \cdot_{A \oplus V} (z + w) - (y + v) \cdot_{A \oplus V} \{x + u, z + w\}_{A \oplus V} \\
&= \{x + u, y \cdot z + l(y) w + r(z) v\}_{A \oplus V} - (\{x, y\} + L(x) v + R(y) u) \cdot_{A \oplus V} (z + w) \\
&\quad - (y + v) \cdot_{A \oplus V} (\{x, z\} + L(x) w + R(z) u) \\
&= \{x, y \cdot z\} + L(x) l(y) w + L(x) r(z) v + R(y \cdot z) u - \{x, y\} \cdot z - l(\{x, y\}) w - r(z) L(x) v - r(z) R(y) u \\
&\quad - y \cdot \{x, z\} - l(y) L(x) w - l(y) R(z) u - r(\{x, z\}) v.
\end{align*}
This simplifies to
\begin{align*}
&L(x) l(y) w + L(x) r(z) v + R(y \cdot z) u - l(\{x, y\}) w - r(z) L(x) v\\&\quad\quad\quad\quad - r(z) R(y) u - l(y) L(x) w - l(y) R(z) u - r(\{x, z\}) v.
\end{align*}
Thus, $(A \oplus V, \cdot_{A \oplus V}, \{\cdot, \cdot\}_{A \oplus V})$ is an \textsf{AWB} if and only if $(V, l, r, L, R)$ is a representation of $(A, \cdot, \{\cdot, \cdot\})$.
\end{proof}
 
We now study relative averaging operators on almost Poisson algebras with respect to a given representation. These operators allow us to embed almost Poisson algebras into \textsf{AWB}s, and we provide some illustrative examples. First, we recall the notion of relative averaging operators (also known as embedding tensors) on commutative associative algebras (see \cite{aguiar}).
 
\begin{defn}
A linear map $\mathcal{K}: V \to A$ is called a \emph{relative averaging operator} on $(A, \c )$ with respect to a given representation $(V, \mu)$ if, for all $u, v \in V$, it satisfies
\begin{equation}\label{embass}
\mathcal{K}(u) \c  \mathcal{K}(v) = \mathcal{K}(\mu(\mathcal{K}(u)) v).
\end{equation}
\end{defn}
 
\begin{defn}
A relative averaging operator on a commutative associative algebra $(A, \c )$ with respect to the adjoint representation is called an \emph{averaging operator}. In this case, the identity \eqref{embass} can be rewritten as
\[
\mathcal{K}(x) \c  \mathcal{K}(y) = \mathcal{K}(\mathcal{K}(x) \c  y) = \mathcal{K}(x \c  \mathcal{K}(y)).
\]
\end{defn}
 
\begin{prop}\label{SDPandgraphassalg}
Let $(A, \c )$ be a commutative associative algebra and $(V, \mu)$ a representation. Then $(A \oplus V, \cdot_{A \oplus V})$ is an associative algebra, where
\begin{equation}\label{eq1}
(x + u) \cdot_{A \oplus V} (y + v) = x \c  y + \mu(x) v, \quad \forall x, y \in A, \, u, v \in V.
\end{equation}
This is called the \emph{hemisemi-direct product} associative algebra and is denoted by $A \oplus_\mu V$. Moreover, a linear map $\mathcal{K}: V \to A$ is a relative averaging operator on $(A, \c )$ if and only if the graph $\mathrm{Gr}(\mathcal{K}) = \{ \mathcal{K}(u) + u \mid u \in V \}$ is a subalgebra of the hemisemi-direct algebra $A \oplus_\mu V$.
\end{prop}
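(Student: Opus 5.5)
Both claims reduce to short direct computations in $A\oplus V$.

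For associativity of the hemisemi-direct product, take $x,y,z\in A$ and $u,v,w\in V$ and expand both bracketings using \eqref{eq1}. On the left,
\begin{align*}
((x+u)\cdot_{A\oplus V}(y+v))\cdot_{A\oplus V}(z+w) &= (x\c y+\mu(x)v)\cdot_{A\oplus V}(z+w)\\
&= (x\c y)\c z+\mu(x\c y)w.
\end{align*}
On the right,
\begin{align*}
(x+u)\cdot_{A\oplus V}((y+v)\cdot_{A\oplus V}(z+w)) &= (x+u)\cdot_{A\oplus V}(y\c z+\mu(y)w)\\
&= x\c(y\c z)+\mu(x)\mu(y)w.
\end{align*}
The $A$-components agree by associativity of $\c$. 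The $V$-components agree because $(V,\mu)$ is a representation, so $\mu(x\c y)=\mu(x)\mu(y)$. No other terms appear, since the $V$-component of the left factor never enters \eqref{eq1}. This is the whole first part. Note that the product is not commutative, since $u$ is discarded on the left; that is why the statement only claims associativity.

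For the graph characterization, take $u,v\in V$ and compute
$$(\mathcal{K}(u)+u)\cdot_{A\oplus V}(\mathcal{K}(v)+v)=\mathcal{K}(u)\c\mathcal{K}(v)+\mu(\mathcal{K}(u))v.$$
An element $a+w$ with $a\in A$ and $w\in V$ lies in $\mathrm{Gr}(\mathcal{K})$ exactly when $a=\mathcal{K}(w)$, because the graph meets $A$ only in $0$. So the product above lies in the graph if and only if
$$\mathcal{K}(u)\c\mathcal{K}(v)=\mathcal{K}\bigl(\mu(\mathcal{K}(u))v\bigr),$$
which is exactly \eqref{embass}. Hence closure of the graph under multiplication for all $u,v$ is equivalent to $\mathcal{K}$ being a relative averaging operator. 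The graph is clearly a linear subspace, so this closure is all that is needed for it to be a subalgebra.

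The only point requiring care is the membership criterion for the graph, namely that the $V$-component determines the $A$-component. Once that is stated, both directions of the equivalence follow from the same line of computation.
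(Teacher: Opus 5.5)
Your proof is correct. It expands both bracketings using associativity of $\cdot$ and $\mu(x\cdot y)=\mu(x)\mu(y)$, and it uses uniqueness of the decomposition in $A\oplus V$ to reduce closure of $\mathrm{Gr}(\mathcal{K})$ to \eqref{embass}. The paper states this proposition without proof, and your direct computation is the routine verification it leaves implicit, of the same kind as the paper's own check for the \textsf{AWB} hemisemi-direct product $A\oplus_{\mu,\rho}V$.
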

 
Let $(A, \cdot)$ be an associative algebra. Recall that a linear map $N: A \to A$ is said to be a \emph{Nijenhuis operator} on $A$ if it satisfies
\begin{align}\label{nij-ass}
N(x) \cdot N(y) = N(N(x) \cdot y + x \cdot N(y) - N(x \cdot y)).
\end{align}
 
\begin{prop}
Let $(A, \c )$ be a commutative associative algebra. A linear map $\mathcal{K}: V \to A$ is a relative averaging operator on $(A, \c )$ with respect to a representation $(V, \mu)$ if and only if the map
\[
N_{\mathcal{K}}: (A \oplus V) \to (A \oplus V), \quad x + u \mapsto \mathcal{K}(u)
\]
is a Nijenhuis operator on the hemisemi-direct product $A \oplus_\mu V$.
\end{prop}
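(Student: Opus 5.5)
We compute $N_{\mathcal{K}}(x+u)\cdot N_{\mathcal{K}}(y+v)$ and $N_{\mathcal{K}}$ applied to the Nijenhuis expression directly in the hemisemi-direct product $A\oplus_\mu V$ of Proposition \ref{SDPandgraphassalg}, and show that the Nijenhuis identity \eqref{nij-ass} reduces exactly to the relative averaging identity \eqref{embass}. Throughout, write $\cdot$ for $\cdot_{A\oplus V}$ given by \eqref{eq1}, so that
\[
(x+u)\cdot(y+v)=x\c y+\mu(x)v .
\]
Note that $N_{\mathcal{K}}$ has image in $A$ and vanishes on $A$, so $N_{\mathcal{K}}^2=0$.

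\textbf{Left-hand side.} We have
\[
N_{\mathcal{K}}(x+u)\cdot N_{\mathcal{K}}(y+v)=\mathcal{K}(u)\cdot\mathcal{K}(v)=\mathcal{K}(u)\c\mathcal{K}(v),
\]
since both factors lie in $A$.

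\textbf{Right-hand side.} We compute the three inner terms:
\[
N_{\mathcal{K}}(x+u)\cdot(y+v)=\mathcal{K}(u)\c y+\mu(\mathcal{K}(u))v,
\]
\[
(x+u)\cdot N_{\mathcal{K}}(y+v)=x\c\mathcal{K}(v)+\mu(x)\bigl(0\bigr)=x\c \mathcal{K}(v),
\]
\[
N_{\mathcal{K}}\bigl((x+u)\cdot(y+v)\bigr)=N_{\mathcal{K}}\bigl(x\c y+\mu(x)v\bigr)=\mathcal{K}(\mu(x)v).
\]
The last term lies in $A$, and the $A$-components of the first two are killed by $N_{\mathcal{K}}$. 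Applying $N_{\mathcal{K}}$ to the combination, which only sees the $V$-component, therefore gives
\[
N_{\mathcal{K}}\bigl(N_{\mathcal{K}}(x+u)\cdot(y+v)+(x+u)\cdot N_{\mathcal{K}}(y+v)-N_{\mathcal{K}}((x+u)\cdot(y+v))\bigr)=\mathcal{K}\bigl(\mu(\mathcal{K}(u))v\bigr).
\]

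\textbf{Conclusion.} The Nijenhuis condition for $N_{\mathcal{K}}$ is thus equivalent to
\[
\mathcal{K}(u)\c\mathcal{K}(v)=\mathcal{K}(\mu(\mathcal{K}(u))v)\qquad\text{for all } x,y\in A,\ u,v\in V .
\]
Since $x,y$ do not appear, this is exactly \eqref{embass}. This gives both directions: if $\mathcal{K}$ satisfies \eqref{embass}, the identity holds for all arguments; conversely, taking $x=y=0$ recovers \eqref{embass}.

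The only care point is bookkeeping the asymmetric product \eqref{eq1}. The $V$-component of the left factor never contributes, which is why no $\mu(\mathcal{K}(v))u$ term appears. This matches the one-sided form of \eqref{embass}, which does not require symmetry in $u,v$.
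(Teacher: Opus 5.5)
Your proof is correct and complete. The paper states this proposition without proof, and your direct check in $A\oplus_\mu V$ is exactly the routine verification it leaves implicit: $N_{\mathcal{K}}$ kills $A$, so only the $V$-component $\mu(\mathcal{K}(u))v$ of the inner Nijenhuis expression survives, which reduces \eqref{nij-ass} to \eqref{embass}.
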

 
\begin{prop}\label{commtoass}
Let $\mathcal{K}: V \to A$ be a relative averaging operator on a commutative associative algebra $(A, \c )$ with respect to a representation $(V, \mu)$. Define
\begin{align*}
u \cdot_{\mathcal{K}} v = \mu(\mathcal{K}(u)) v, \quad \forall u, v \in V.
\end{align*}
Then $(V, \cdot_{\mathcal{K}})$ is an associative algebra.
\end{prop}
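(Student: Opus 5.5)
We prove associativity of $\cdot_{\mathcal{K}}$ on $V$ directly, using only the representation property $\mu(x\c y)=\mu(x)\mu(y)$ together with the averaging identity \eqref{embass}. No commutativity of $V$'s new product is claimed, and none is needed.

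For $u,v,w\in V$ we compute both sides. The left-hand side is
\[
(u\cdot_{\mathcal{K}} v)\cdot_{\mathcal{K}} w=\mu\bigl(\mathcal{K}(\mu(\mathcal{K}(u))v)\bigr)w .
\]
By \eqref{embass}, $\mathcal{K}(\mu(\mathcal{K}(u))v)=\mathcal{K}(u)\c \mathcal{K}(v)$, so the left-hand side equals $\mu(\mathcal{K}(u)\c\mathcal{K}(v))w$. Since $(V,\mu)$ is a representation of $(A,\c)$, this becomes $\mu(\mathcal{K}(u))\mu(\mathcal{K}(v))w$. The right-hand side is
\[
u\cdot_{\mathcal{K}}(v\cdot_{\mathcal{K}} w)=\mu(\mathcal{K}(u))\bigl(\mu(\mathcal{K}(v))w\bigr),
\]
which is the same expression, so $\cdot_{\mathcal{K}}$ is associative.

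There is no real obstacle here. The only point to check is that \eqref{embass} is applied in exactly the form $\mathcal{K}(\mu(\mathcal{K}(u))v)=\mathcal{K}(u)\c\mathcal{K}(v)$, which is its literal statement. Bilinearity of $\cdot_{\mathcal{K}}$ is immediate from the linearity of $\mu$ and $\mathcal{K}$.

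An alternative route goes through Proposition \ref{SDPandgraphassalg}. The graph $\mathrm{Gr}(\mathcal{K})$ is a subalgebra of $A\oplus_\mu V$, and under the linear isomorphism $u\mapsto \mathcal{K}(u)+u$ the induced product is
\[
(\mathcal{K}(u)+u)\cdot(\mathcal{K}(v)+v)=\mathcal{K}(u)\c\mathcal{K}(v)+\mu(\mathcal{K}(u))v=\mathcal{K}(u\cdot_{\mathcal{K}}v)+u\cdot_{\mathcal{K}}v .
\]
Hence $(V,\cdot_{\mathcal{K}})$ is isomorphic to an associative subalgebra of $A\oplus_\mu V$, and is therefore associative.
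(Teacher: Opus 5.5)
Your direct computation is correct and is exactly the paper's proof: you use \eqref{embass} to rewrite $\mathcal{K}(\mu(\mathcal{K}(u))v)$ as $\mathcal{K}(u)\c\mathcal{K}(v)$, and then $\mu(x\c y)=\mu(x)\mu(y)$ makes both sides agree. Your alternative argument via the graph $\mathrm{Gr}(\mathcal{K})$ in $A\oplus_\mu V$ is also valid, but it is not needed.
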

 
\begin{proof}
For any $u, v, w \in V$, we have
\begin{align*}
(u \cdot_{\mathcal{K}} v) \cdot_{\mathcal{K}} w - u \cdot_{\mathcal{K}} (v \cdot_{\mathcal{K}} w) &= \mu(\mathcal{K}(\mu(\mathcal{K}(u)) v)) w - \mu(\mathcal{K}(u)) (\mu(\mathcal{K}(v)) w) \\
&\overset{\eqref{embass}}{=} \mu(\mathcal{K}(u) \c  \mathcal{K}(v)) w - \mu(\mathcal{K}(u)) (\mu(\mathcal{K}(v)) w) = 0.
\end{align*}
Thus, $(V, \cdot_{\mathcal{K}})$ is an associative algebra.
\end{proof}
 
\begin{defn}
Let $(A, \c , [\cdot, \cdot])$ be an almost Poisson algebra and $(V, \mu, \rho)$ a representation. A linear map $\mathcal{K}: V \to A$ is said to be a \emph{relative averaging operator} on the almost Poisson algebra $(A, \c , [\cdot, \cdot])$ with respect to $(V, \mu, \rho)$ if $\mathcal{K}$ is both a relative averaging operator on the commutative associative algebra $(A, \c )$ with respect to $(V, \mu)$ and satisfying 
$[\mathcal{K}(u) ,  \mathcal{K}(v)] = \mathcal{K}(\mu(\mathcal{K}(u)) v),\quad\forall u,v\in V.$
\end{defn}
 
\begin{defn}
A relative averaging operator on an almost Poisson algebra with respect to the adjoint representation is called an \emph{averaging operator}. The tuple $(A, \c , [\cdot, \cdot],\mathcal{K})$ is called averaging almost Poisson algebra.
\end{defn}
 
In the following, we provide characterizations of relative averaging operators on almost Poisson algebras using graphs and Nijenhuis operators.
 
\begin{prop}
Let $(A, \c , [\cdot, \cdot])$ be an almost Poisson algebra and $(V, \mu, \rho)$ a representation. Then $(A \oplus V, \cdot_{A \oplus V}, \{\cdot, \cdot\}_{A \oplus V})$ is an \textsf{AWB}, called the \emph{hemisemi-direct product} of the almost Poisson algebra $A$, and denoted by $A \oplus_{\mu, \rho} V$, where $\cdot_{A \oplus V}$ is given in \eqref{eq1} and $\{\cdot, \cdot\}_{A \oplus V}$ is defined by
\begin{equation}\label{eq2}
\{x + u, y + v\}_{A \oplus V} = [x, y] + \rho(x) v, \quad \forall x, y \in A, \, u, v \in V.
\end{equation}
\end{prop}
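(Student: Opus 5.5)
We have to check two things: that $(A\oplus V,\cdot_{A\oplus V})$ is associative, and that the left Leibniz identity \eqref{comleibpoisson} holds for the bracket \eqref{eq2}. Associativity is exactly the first part of Proposition \ref{SDPandgraphassalg}; it can also be checked directly. For $x+u,\,y+v,\,z+w$ we have
\[
((x+u)\cdot(y+v))\cdot(z+w)=(x\cdot y)\cdot z+\mu(x\cdot y)w,
\]
\[
(x+u)\cdot((y+v)\cdot(z+w))=x\cdot(y\cdot z)+\mu(x)\mu(y)w.
\]
These agree because $\mu$ is a representation of $(A,\cdot)$. Note that the resulting product is not commutative, which is why we land in an \textsf{AWB} rather than an almost Poisson algebra.

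For the Leibniz identity we expand both sides. Since the $V$-components $u,v$ of the first slots play no role in either operation, the left-hand side is
\[
\{x+u,(y+v)\cdot(z+w)\}=\{x+u,\;y\cdot z+\mu(y)w\}=[x,y\cdot z]+\rho(x)\mu(y)w.
\]
The right-hand side is
\[
\{x+u,y+v\}\cdot(z+w)+(y+v)\cdot\{x+u,z+w\}=[x,y]\cdot z+\mu([x,y])w+y\cdot[x,z]+\mu(y)\rho(x)w .
\]
We compare components. The $A$-components agree by the Leibniz rule \eqref{eq:malpoisalg} in $A$. The $V$-components require
\[
\rho(x)\mu(y)=\mu([x,y])+\mu(y)\rho(x),
\]
which is precisely the representation axiom \eqref{eq:repmalcpoisalg2}. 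Axiom \eqref{eq:repmalcpoisalg1} is never used.

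The main point needing care is the term $\{x,y\}\cdot(z+w)$. Because the $V$-component of $\{x+u,y+v\}$ is $\rho(x)v$, which sits in the left factor, the hemisemi-direct product discards it. In other words, $(\rho(x)v)\cdot(z+w)=\mu(0)\,\cdots=0$, since only the $A$-part of the left factor acts. We must make sure no stray terms in $v$ or $u$ survive; they do not, as the computation above shows. Finally, no skew-symmetry of $\{\cdot,\cdot\}_{A\oplus V}$ and no commutativity of the product are required in an \textsf{AWB}, so the proof is complete.
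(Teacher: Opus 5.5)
Your proof is correct and follows the paper's argument. Associativity comes from Proposition \ref{SDPandgraphassalg}, and the expansion of the Leibniz identity is the same as the paper's, with the $A$-part reducing to the Leibniz rule in $A$ and the $V$-part to exactly \eqref{eq:repmalcpoisalg2}; your remark that \eqref{eq:repmalcpoisalg1} is never needed is accurate, even though the paper cites both axioms.
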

 
\begin{proof}
According to Proposition \ref{SDPandgraphassalg}, $(A \oplus V, \cdot_{A \oplus V})$ is an associative algebra. On the other hand, using the compatibility condition of the almost Poisson algebra and the representation properties \eqref{eq:repmalcpoisalg1}--\eqref{eq:repmalcpoisalg2}, for any $x, y, z \in A$ and $u, v, w \in V$, we have
\begin{align*}
&\{x + u, (y + v) \cdot_{A \oplus V} (z + w)\}_{A \oplus V} - \{x + u, y + v\}_{A \oplus V} \cdot_{A \oplus V} (z + w) \\
&\quad - (y + v) \cdot_{A \oplus V} \{x + u, z + w\}_{A \oplus V} \\
&= [x, y \c  z] + \rho(x) (\mu(y) w) - [x, y] \c  z - \mu([x, y]) w - y \c  [x, z] - \mu(y) (\rho(x) w) = 0.
\end{align*}
Thus, $(A \oplus V, \cdot_{A \oplus V}, \{\cdot, \cdot\}_{A \oplus V})$ is an \textsf{AWB}.
\end{proof}
 
\begin{thm}\label{graphMLie}
Let $(A, \c , [\cdot, \cdot])$ be an almost Poisson algebra and $(V, \mu, \rho)$ a representation. A linear map $\mathcal{K}: V \to A$ is a relative averaging operator if and only if the graph $\mathrm{Gr}(\mathcal{K}) = \{\mathcal{K}(u) + u \mid u \in V\}$ is an algebra with bracket subalgebra of the hemisemi-direct product $A \oplus_{\mu, \rho} V$.
\end{thm}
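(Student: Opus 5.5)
The plan is to reduce the statement to two closure computations inside the hemisemi-direct product $A\oplus_{\mu,\rho}V$, one for the product $\cdot_{A\oplus V}$ given by \eqref{eq1} and one for the bracket $\{\cdot,\cdot\}_{A\oplus V}$ given by \eqref{eq2}. Since $\mathrm{Gr}(\mathcal{K})$ is automatically a linear subspace of $A\oplus V$ (it is the image of the injective linear map $u\mapsto \mathcal{K}(u)+u$), being an algebra with bracket subalgebra means exactly that it is closed under $\cdot_{A\oplus V}$ and under $\{\cdot,\cdot\}_{A\oplus V}$. I will therefore treat the two operations separately and show that each closure condition is equivalent to the corresponding defining identity of a relative averaging operator.

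For the associative part I will simply invoke Proposition \ref{SDPandgraphassalg}. Explicitly,
\[
(\mathcal{K}(u)+u)\cdot_{A\oplus V}(\mathcal{K}(v)+v)=\mathcal{K}(u)\c\mathcal{K}(v)+\mu(\mathcal{K}(u))v .
\]
An element $x+w$ of $A\oplus V$ lies in $\mathrm{Gr}(\mathcal{K})$ if and only if $x=\mathcal{K}(w)$. Hence closure under the product is equivalent to \eqref{embass} for all $u,v\in V$.

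For the bracket, the same reasoning gives
\[
\{\mathcal{K}(u)+u,\mathcal{K}(v)+v\}_{A\oplus V}=[\mathcal{K}(u),\mathcal{K}(v)]+\rho(\mathcal{K}(u))v ,
\]
so closure under the bracket is equivalent to $[\mathcal{K}(u),\mathcal{K}(v)]=\mathcal{K}(\rho(\mathcal{K}(u))v)$ for all $u,v\in V$. Combining the two equivalences gives both directions of the theorem at once: the forward direction by reading the computations left to right, and the converse by reading off the $A$-component of the bracket and product of two graph elements.

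The only real obstacle is notational. The displayed bracket condition in the definition of a relative averaging operator on an almost Poisson algebra reads $\mathcal{K}(\mu(\mathcal{K}(u))v)$ on the right. For the graph characterization to hold, this must be understood as $\mathcal{K}(\rho(\mathcal{K}(u))v)$, the Lie-type analogue of \eqref{embass}. With the $\mu$ reading the statement would fail in general, since the bracket of two graph elements has $V$-component $\rho(\mathcal{K}(u))v$, not $\mu(\mathcal{K}(u))v$. I will make this reading explicit at the start of the proof and then carry out the two short computations above.
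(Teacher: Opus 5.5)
Your proof is correct and follows the natural direct route. The paper states this theorem without proof, but the intended argument is exactly your componentwise closure check: the $V$-component of a product or bracket of graph elements determines which graph element it would have to be, and comparing $A$-components then gives the two averaging identities. Your reading of the bracket condition as $[\mathcal{K}(u),\mathcal{K}(v)]=\mathcal{K}(\rho(\mathcal{K}(u))v)$ is the right one, since it is precisely what the paper itself uses in the proof of Theorem~\ref{poissontoNCLP} when it replaces $\mu(\mathcal{K}(\rho(\mathcal{K}(u))v))$ by $\mu([\mathcal{K}(u),\mathcal{K}(v)])$.
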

 
A \emph{Nijenhuis operator} on an \textsf{AWB} $(A, \cdot, \{\cdot, \cdot\})$ is a linear map $N: A \to A$ that is a Nijenhuis operator on the associative algebra $(A, \cdot)$ and satisfies
\begin{align}\label{nij-leibniz}
\{N(x), N(y)\} = N(\{N(x), y\} + \{x, N(y)\} - N(\{x, y\})).
\end{align}
 
\begin{prop}
Let $(A, \c , [\cdot, \cdot])$ be an almost Poisson algebra. A linear map $\mathcal{K}: V \to A$ is a relative averaging operator if and only if the map
\[
N_{\mathcal{K}}: (A \oplus V) \to (A \oplus V), \quad x + u \mapsto \mathcal{K}(u)
\]
is a Nijenhuis operator on the hemisemi-direct product $A \oplus_{\mu, \rho} V$.
\end{prop}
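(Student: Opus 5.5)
The plan is to unwind both sides of the Nijenhuis conditions \eqref{nij-ass} and \eqref{nij-leibniz} for $N_{\mathcal{K}}$ on $A\oplus_{\mu,\rho}V$, using the explicit operations \eqref{eq1} and \eqref{eq2}. The point to exploit is that $N_{\mathcal{K}}$ has image in $A$ and kills $A$. So $N_{\mathcal{K}}$ applied to any element only sees its $V$-component, and this will collapse each Nijenhuis identity to exactly one of the defining identities of a relative averaging operator.

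I read the bracket condition in the definition of relative averaging operator on an almost Poisson algebra as $[\mathcal{K}(u),\mathcal{K}(v)]=\mathcal{K}(\rho(\mathcal{K}(u))v)$. The $\mu$ written in the displayed definition is evidently a misprint for $\rho$, and the $\rho$ version is also what Theorem \ref{graphMLie} needs. I would state this reading explicitly at the start of the proof.

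\textbf{Associative part.} Take $X=x+u$ and $Y=y+v$. By \eqref{eq1} the product of two elements only involves their $A$-components and $\mu$ of the left $A$-component acting on the right $V$-component. Hence:
\begin{align*}
N_{\mathcal{K}}(X)\cdot_{A\oplus V} N_{\mathcal{K}}(Y) &= \mathcal{K}(u)\cdot\mathcal{K}(v),\\
N_{\mathcal{K}}(X)\cdot_{A\oplus V} Y &= \mathcal{K}(u)\cdot y+\mu(\mathcal{K}(u))v,\\
X\cdot_{A\oplus V} N_{\mathcal{K}}(Y) &= x\cdot\mathcal{K}(v),\\
N_{\mathcal{K}}(X\cdot_{A\oplus V} Y) &= \mathcal{K}(\mu(x)v)\in A.
\end{align*}
In the sum $N_{\mathcal{K}}(X)\cdot Y+X\cdot N_{\mathcal{K}}(Y)-N_{\mathcal{K}}(X\cdot Y)$, the only $V$-component is $\mu(\mathcal{K}(u))v$. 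So the right-hand side of \eqref{nij-ass} is $\mathcal{K}(\mu(\mathcal{K}(u))v)$. Thus \eqref{nij-ass} holding for all $X,Y$ is equivalent to \eqref{embass} holding for all $u,v$. This recovers the earlier proposition for $A\oplus_\mu V$.

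\textbf{Bracket part.} The same computation with \eqref{eq2} in place of \eqref{eq1} gives:
\begin{align*}
\{N_{\mathcal{K}}X,N_{\mathcal{K}}Y\} &= [\mathcal{K}(u),\mathcal{K}(v)],\\
\{N_{\mathcal{K}}X,Y\} &= [\mathcal{K}(u),y]+\rho(\mathcal{K}(u))v,\\
\{X,N_{\mathcal{K}}Y\} &= [x,\mathcal{K}(v)],\\
N_{\mathcal{K}}\{X,Y\} &= \mathcal{K}(\rho(x)v)\in A.
\end{align*}
Applying $N_{\mathcal{K}}$ to the combination therefore yields $\mathcal{K}(\rho(\mathcal{K}(u))v)$. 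So \eqref{nij-leibniz} is equivalent to $[\mathcal{K}(u),\mathcal{K}(v)]=\mathcal{K}(\rho(\mathcal{K}(u))v)$ for all $u,v$.

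Since $x$ and $y$ drop out entirely, both equivalences hold in both directions: the forward direction is the displayed identities, and the converse follows by choosing $x=y=0$. Combining the two parts gives the statement. The only real obstacle is bookkeeping: checking that no $V$-component sneaks in from the terms $X\cdot N_{\mathcal{K}}(Y)$ and $\{X,N_{\mathcal{K}}Y\}$. It does not, because $N_{\mathcal{K}}(Y)$ has zero $V$-part and the hemisemi-direct operations never let $V$ act on $A$.
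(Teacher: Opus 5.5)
Your proof is correct. The paper states this proposition without proof, and your direct unwinding of \eqref{nij-ass} and \eqref{nij-leibniz} on $A\oplus_{\mu,\rho}V$ is the routine verification it leaves implicit: the associative identity collapses to \eqref{embass} and the bracket identity to $[\mathcal{K}(u),\mathcal{K}(v)]=\mathcal{K}(\rho(\mathcal{K}(u))v)$. Your reading of the definition with $\rho$ in place of the printed $\mu$ is the right one, and it is needed for the statement to hold; the proof of Theorem \ref{poissontoNCLP} uses exactly this identity.
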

 
In the following theorem, we construct an \textsf{AWB} from a relative averaging operator on an almost Poisson algebra and interpret this \textsf{AWB} as a \emph{duplication} of the almost Poisson algebra.
 
\begin{thm}\label{poissontoNCLP}
Let $\mathcal{K}: V \to A$ be a relative averaging operator on an almost Poisson algebra $(A, \c , [\cdot, \cdot])$ with respect to a representation $(V, \mu, \rho)$. Then $V$ carries an \textsf{AWB} structure given by
\begin{align*}
u \cdot_{\mathcal{K}} v &= \mu(\mathcal{K}(u)) v, \qquad \{u, v\}_{\mathcal{K}} = \rho(\mathcal{K}(u)) v, \quad \forall u, v \in V.
\end{align*}
\end{thm}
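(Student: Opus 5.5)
The plan is to verify the two defining requirements of an \textsf{AWB} on $V$ directly: associativity of $\cdot_{\mathcal{K}}$, and the left Leibniz identity \eqref{comleibpoisson} for $\{\cdot,\cdot\}_{\mathcal{K}}$ with respect to $\cdot_{\mathcal{K}}$. No skew-symmetry or commutativity is needed, which is consistent with $V$ being only an \textsf{AWB}.

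\emph{Associativity.} Since $\mathcal{K}$ is in particular a relative averaging operator on the commutative associative algebra $(A,\c)$ with respect to $(V,\mu)$, Proposition \ref{commtoass} applies verbatim. Hence $(V,\cdot_{\mathcal{K}})$ is associative, and nothing further is required here.

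\emph{Leibniz identity.} For $u,v,w\in V$ I will expand both sides. The left-hand side is
\[
\{u,v\cdot_{\mathcal{K}}w\}_{\mathcal{K}}=\rho(\mathcal{K}(u))\mu(\mathcal{K}(v))w .
\]
The right-hand side is
\[
\{u,v\}_{\mathcal{K}}\cdot_{\mathcal{K}}w+v\cdot_{\mathcal{K}}\{u,w\}_{\mathcal{K}}=\mu\bigl(\mathcal{K}(\rho(\mathcal{K}(u))v)\bigr)w+\mu(\mathcal{K}(v))\rho(\mathcal{K}(u))w .
\]
The representation axiom \eqref{eq:repmalcpoisalg2}, applied with $x=\mathcal{K}(u)$ and $y=\mathcal{K}(v)$, rewrites the left-hand side as
\[
\mu\bigl([\mathcal{K}(u),\mathcal{K}(v)]\bigr)w+\mu(\mathcal{K}(v))\rho(\mathcal{K}(u))w .
\]
The second terms cancel. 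The identity therefore reduces to
\[
\mu\bigl([\mathcal{K}(u),\mathcal{K}(v)]\bigr)=\mu\bigl(\mathcal{K}(\rho(\mathcal{K}(u))v)\bigr),
\]
which follows from the bracket condition on $\mathcal{K}$, namely $[\mathcal{K}(u),\mathcal{K}(v)]=\mathcal{K}(\rho(\mathcal{K}(u))v)$.

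\emph{Expected obstacle.} The only delicate point is this bracket condition. In the definition of a relative averaging operator on an almost Poisson algebra, the right-hand side is printed as $\mathcal{K}(\mu(\mathcal{K}(u))v)$. The intended condition must be $[\mathcal{K}(u),\mathcal{K}(v)]=\mathcal{K}(\rho(\mathcal{K}(u))v)$, for two reasons. First, this is exactly what makes $\mathrm{Gr}(\mathcal{K})$ closed under the bracket \eqref{eq2} in Theorem \ref{graphMLie}, since $\{\mathcal{K}(u)+u,\mathcal{K}(v)+v\}_{A\oplus V}=[\mathcal{K}(u),\mathcal{K}(v)]+\rho(\mathcal{K}(u))v$. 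Second, with the printed version one would get $[\mathcal{K}(u),\mathcal{K}(v)]=\mathcal{K}(u)\c\mathcal{K}(v)$, which is not the intended notion. I will state explicitly that I read the definition with $\rho$ and then conclude as above.

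Alternatively, one could obtain the theorem from Theorem \ref{graphMLie} by transporting the \textsf{AWB} structure of $\mathrm{Gr}(\mathcal{K})\subseteq A\oplus_{\mu,\rho}V$ along the linear isomorphism $u\mapsto \mathcal{K}(u)+u$. I prefer the direct computation above because it avoids relying on an unproved statement.
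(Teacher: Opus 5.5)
Your proof is correct and matches the paper's argument: associativity comes from Proposition \ref{commtoass}, and the Leibniz defect is expanded and shown to vanish using \eqref{eq:repmalcpoisalg2} together with the bracket condition. You read that condition as $[\mathcal{K}(u),\mathcal{K}(v)]=\mathcal{K}(\rho(\mathcal{K}(u))v)$, which is exactly what the paper's own proof silently uses, so you are right to flag the misprinted $\mu$ in the definition.
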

 
\begin{proof}
According to Proposition \ref{commtoass}, $(V, \cdot_{\mathcal{K}})$ is an associative algebra. On the other hand, for any $u, v, w \in V$, we have
\begin{align*}
&\{u, v \cdot_{\mathcal{K}} w\}_{\mathcal{K}} - \{u, v\}_{\mathcal{K}} \cdot_{\mathcal{K}} w - v \cdot_{\mathcal{K}} \{u, w\}_{\mathcal{K}}\\ =& \{\ u, \mu(\mathcal{K}(v)) w \}_{\mathcal{K}} - \rho(\mathcal{K}(u)) v \cdot_{\mathcal{K}} w - v \cdot_{\mathcal{K}} \rho(\mathcal{K}(u)) w \\
=& \rho(\mathcal{K}(u)) (\mu(\mathcal{K}(v)) w) - \mu(\mathcal{K}(\rho(\mathcal{K}(u)) v)) w - \mu(\mathcal{K}(v)) (\rho(\mathcal{K}(u)) w) \\
=& \rho(\mathcal{K}(u)) (\mu(\mathcal{K}(v)) w) - \mu([\mathcal{K}(u), \mathcal{K}(v)]) w - \mu(\mathcal{K}(v)) (\rho(\mathcal{K}(u)) w) = 0.
\end{align*}
Hence, $(V, \cdot_{\mathcal{K}}, \{\cdot, \cdot\}_{\mathcal{K}})$ is an \textsf{AWB}.
\end{proof}
 
\begin{cor}
Let $\mathcal{K}: A \to A$ be an averaging operator on an almost Poisson algebra $(A, \c , [\cdot, \cdot])$. Then $(A, \cdot_{\mathcal{K}}, \{\cdot, \cdot\}_{\mathcal{K}})$ is an \textsf{AWB}, where
\begin{align*}
x \cdot_{\mathcal{K}} y &= \mathcal{K}(x) \c  y, \qquad \{x, y\}_{\mathcal{K}} = [\mathcal{K}(x), y], \quad \forall x, y \in A.
\end{align*}
\end{cor}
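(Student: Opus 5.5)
The plan is to derive the corollary as the special case of Theorem \ref{poissontoNCLP} in which $V=A$ carries the adjoint representation. This needs two checks: that the adjoint data form a representation, and that $\mathcal{K}$ meets the hypothesis of that theorem.

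First, by Lemma \ref{lem:dualrepAP}, $(A,\mathrm{ad},L)$ is a representation of the almost Poisson algebra $(A,\c,[\cdot,\cdot])$. Here $\mu=L$ and $\rho=\mathrm{ad}$, so $\mu(x)y=x\c y$ and $\rho(x)y=[x,y]$. Concretely, \eqref{eq:repmalcpoisalg1} is the Leibniz rule \eqref{eq:malpoisalg}, after using skew-symmetry of the bracket and commutativity of the product. Likewise, \eqref{eq:repmalcpoisalg2} is \eqref{eq:malpoisalg} rearranged as $[x,y]\c z=[x,y\c z]-y\c[x,z]$.

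Second, by definition an averaging operator on $(A,\c,[\cdot,\cdot])$ is exactly a relative averaging operator with respect to this adjoint representation. So $\mathcal{K}$ satisfies the hypotheses of Theorem \ref{poissontoNCLP} with $V=A$.

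Substituting into the formulas of that theorem gives
\[
x\cdot_{\mathcal{K}}y=\mu(\mathcal{K}(x))y=\mathcal{K}(x)\c y,\qquad \{x,y\}_{\mathcal{K}}=\rho(\mathcal{K}(x))y=[\mathcal{K}(x),y].
\]
These are the stated operations, so $(A,\cdot_{\mathcal{K}},\{\cdot,\cdot\}_{\mathcal{K}})$ is an \textsf{AWB}.

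The only point that needs care is the bracket condition in the definition of a relative averaging operator. As printed, it reads $[\mathcal{K}(u),\mathcal{K}(v)]=\mathcal{K}(\mu(\mathcal{K}(u))v)$, but the proof of Theorem \ref{poissontoNCLP} actually uses $[\mathcal{K}(u),\mathcal{K}(v)]=\mathcal{K}(\rho(\mathcal{K}(u))v)$. In the adjoint case this is $[\mathcal{K}(x),\mathcal{K}(y)]=\mathcal{K}([\mathcal{K}(x),y])$. I would read the definition in this intended sense, so the corollary follows immediately.

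As a sanity check, the Leibniz identity can be verified directly. Expanding $\{x,y\cdot_{\mathcal{K}}z\}_{\mathcal{K}}-\{x,y\}_{\mathcal{K}}\cdot_{\mathcal{K}}z-y\cdot_{\mathcal{K}}\{x,z\}_{\mathcal{K}}$ gives
\[
[\mathcal{K}(x),\mathcal{K}(y)\c z]-\mathcal{K}([\mathcal{K}(x),y])\c z-\mathcal{K}(y)\c[\mathcal{K}(x),z].
\]
Replacing $\mathcal{K}([\mathcal{K}(x),y])$ by $[\mathcal{K}(x),\mathcal{K}(y)]$, this vanishes by \eqref{eq:malpoisalg}. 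Associativity of $\cdot_{\mathcal{K}}$ holds by the same argument as in Proposition \ref{commtoass}, using $\mathcal{K}(x)\c\mathcal{K}(y)=\mathcal{K}(\mathcal{K}(x)\c y)$.
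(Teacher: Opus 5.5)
Your proposal is correct and takes the same route as the paper: the corollary is Theorem~\ref{poissontoNCLP} specialized to $V=A$ with the adjoint representation $\mu=L$, $\rho=\mathrm{ad}$. Your reading of the bracket condition as $[\mathcal{K}(u),\mathcal{K}(v)]=\mathcal{K}(\rho(\mathcal{K}(u))v)$, which here becomes $[\mathcal{K}(x),\mathcal{K}(y)]=\mathcal{K}([\mathcal{K}(x),y])$, is the right one, since that is the identity the proof of Theorem~\ref{poissontoNCLP} actually uses.
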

 
\begin{re}
As a consequence of the above results, we obtain a functor $F: \text{\bf aAPA} \to \text{\bf AWB}$ from the category \text{\bf aAPA} of averaging  almost Poisson algebras to the category \text{\bf AWB} of algebras with bracket.
\end{re}
 
\begin{ex}
Let $A$ be a $3$-dimensional vector space with basis $\mathcal{B} = \{e_1, e_2, e_3\}$. Then $(A, \c , [\cdot, \cdot])$ is an almost Poisson algebra, where the nonzero product $\c $ and bracket $[\cdot, \cdot]$ are defined by
\begin{align*}
e_1 \c  e_1 &= e_1, \quad e_1 \c  e_2 = e_2, \quad e_1 \c  e_3 = e_3, \\
[e_1, e_2] &= e_2, \quad [e_1, e_3] = -e_3, \quad [e_2, e_3] = e_1.
\end{align*}
Consider the averaging operator defined by $\mathcal{K}(e_1) = e_1$ (extended linearly). By the previous corollary, $(A, \cdot_{\mathcal{K}}, \{\cdot, \cdot\}_{\mathcal{K}})$ is an \textsf{AWB}, where
\begin{align*}
e_1 \cdot_{\mathcal{K}} e_1 &= e_1, \quad e_1 \cdot_{\mathcal{K}} e_2 = e_2, \quad e_1 \cdot_{\mathcal{K}} e_3 = e_3, \\
\{e_1, e_2\}_{\mathcal{K}} &= e_2, \quad \{e_1, e_3\}_{\mathcal{K}} = -e_3.
\end{align*}
\end{ex}


\begin{thebibliography}{99}

\bibitem{Abdaoui}
E. K. Abdaoui, S. Mabrouk and A. Makhlouf,
Rota–Baxter operators on pre-Lie superalgebras,
\emph{Bull. Malays. Math. Sci. Soc.} \textbf{42} (2019), no. 4, 1567–1606.

\bibitem{Aguiar00}
M. Aguiar,
Pre-Poisson algebras,
\emph{Lett. Math. Phys.} \textbf{54} (2000), 263–277.

\bibitem{Aguiar2}
M. Aguiar,
On the associative analog of Lie \(D\)-bialgebras,
\emph{J. Algebra} \textbf{244} (2001), 492–532.

\bibitem{Bai2010}
C. Bai,
Double construction of Frobenius algebras, Connes cocycles and their duality,
\emph{J. Noncommut. Geom.} (2010), 475–530.

\bibitem{BaiGuoNi}
C. Bai, L. Guo and X. Ni,
\(\mathcal{O}\)-operators on associative algebras and associative Yang–Baxter equations,
\emph{Pac. J. Math.} \textbf{256} (2012), 257–289.

\bibitem{MR3021790}
C. Bai, O. Bellier, L. Guo and X. Ni,
Splitting of operations, Manin products and Rota–Baxter operators,
\emph{Int. Math. Res. Not.} no. 3 (2013), 485–524.

\bibitem{Bax}
G. Baxter,
An analytic problem whose solution follows from a simple algebraic identity,
\emph{Pacific J. Math.} \textbf{10} (1960), 731–742.

\bibitem{MR0674005}
A. A. Belavin and V. G. Drinfeld,
Solutions of the classical Yang–Baxter equation for simple Lie algebras,
\emph{Funktsional. Anal. i Prilozhen.} \textbf{16} no. 3 (1982), 1–29, 96.

\bibitem{Bir1}
G. Birkhoff,
Moyennes des fonctions bornées,
Colloque d'algèbre et de théorie des nombres \textbf{24} (1949), 143–153.

\bibitem{Bir2}
G. Birkhoff,
Lattice theory,
\emph{Proc. Sympos. Pure Math.} \textbf{2} (1961), 163–172.

\bibitem{BCEM}
S. Braiek, T. Chtioui, M. Elhamdadi and S. Mabrouk,
Hom-associative algebras, admissibility and relative averaging operators,
\emph{arXiv:2304.12593}.

\bibitem{Safa}
S. Braiek, T. Chtioui and S. Mabrouk,
Anti-Leibniz algebras: a noncommutative version of mock-Lie algebras,
\emph{J. Geom. Phys.} \textbf{209} (2025), 105385.

\bibitem{SafaFatma}
S. Braiek, T. Chtioui, S. Mabrouk and F. Zouaidi,
Noncommutative Leibniz–Poisson algebras as simultaneous duplications of Poisson algebras and beyond,
(2026), submitted.

\bibitem{Casas1}
J. M. Casas,
Homology with trivial coefficients and universal central extension of algebras with bracket,
\emph{Comm. Algebra} \textbf{35} no. 8 (2007), 2431–2449.

\bibitem{Casas2}
J. M. Casas and T. Datuashvili,
Noncommutative Leibniz–Poisson algebras,
\emph{Comm. Algebra} \textbf{34} (2006), 2507–2530.

\bibitem{Casas3}
J. M. Casas and S. H. Jafari,
Isoclinism of algebras with bracket,
\emph{Bull. Iranian Math. Soc.} \textbf{51} no. 6 (2025), article no. 87.

\bibitem{Casas4}
J. M. Casas, E. Khmaladze and M. Ladra,
Wells-type exact sequence and crossed extensions of algebras with bracket,
\emph{Forum Math.} \textbf{36} no. 6 (2024), 1565–1584.

\bibitem{casas}
J. M. Casas and T. Pirashvili,
Algebras with bracket,
\emph{Manuscripta Math.} \textbf{119} no. 1 (2006), 1–15.

\bibitem{Chtioui1}
T. Chtioui, A. Hajjaji, S. Mabrouk and A. Makhlouf,
Cohomology and deformations of twisted \(\mathcal{O}\)-operators on 3-Lie algebras,
\emph{Filomat} \textbf{37} no. 21 (2023), 6977–6994.

\bibitem{Chtioui2}
T. Chtioui, S. Mabrouk and A. Makhlouf,
Cohomology and deformations of \(\mathcal{O}\)-operators on Hom-associative algebras,
\emph{J. Algebra} \textbf{604} (2022), 727–759.

\bibitem{CK}
A. Connes and D. Kreimer,
Renormalization in quantum field theory and the Riemann–Hilbert problem I,
\emph{Comm. Math. Phys.} \textbf{210} (2000), 249–273.

\bibitem{CannasWeinstein}
A. C. da Silva and A. Weinstein,
\emph{Geometric Models for Noncommutative Algebras},
Berkeley Math. Lecture Notes, Vol. 10, AMS, Providence, RI, 1999.

\bibitem{drinfeld}
V. G. Drinfeld,
Hamiltonian structures on Lie groups, Lie \(D\)-bialgebras and the classical Yang–Baxter equation,
\emph{Soviet Math. Dokl.} \textbf{27} (1983), 68–71.

\bibitem{FM}
R. A. S. Fox and J. B. Miller,
Averaging and Reynolds operators in Banach algebras III,
\emph{J. Math. Anal. Appl.} \textbf{24} (1968), 225–238.

\bibitem{G}
L. Guo,
\emph{An Introduction to Rota–Baxter Algebra},
Surveys of Modern Mathematics, Vol. 4, International Press, 2012.

\bibitem{HP}
C. B. Huijsmans and B. de Pagter,
Averaging operators and positive contractive projections,
\emph{J. Math. Anal. Appl.} \textbf{113} (1986), 163–184.

\bibitem{Kanatchikov}
I. V. Kanatchikov,
On field-theoretic generalizations of a Poisson algebra,
\emph{Rep. Math. Phys.} \textbf{40} no. 2 (1997), 225–234.

\bibitem{KKL}
I. Karimjanov, I. Kaygorodov and M. Ladra,
Rota-type operators on null-filiform associative algebras,
\emph{Linear Multilinear Algebra} \textbf{68} no. 3 (2018), 1–15.

\bibitem{K}
J. L. Kelley,
Averaging operators on \(C_0(X)\),
\emph{Illinois J. Math.} \textbf{2} (1958), 214–223.

\bibitem{KolesnikovMakarLimanov}
P. S. Kolesnikov, L. G. Makar-Limanov and I. P. Shestakov,
The Freiheitssatz for generic Poisson algebras,
\emph{SIGMA Symmetry Integrability Geom. Methods Appl.} \textbf{10} (2014), Paper 115.

\bibitem{Kupershmidt}
B. A. Kupershmidt,
What a classical \(r\)-matrix really is,
\emph{J. Nonlinear Math. Phys.} \textbf{6} (1999), 448–488.

\bibitem{Kuzmin}
E. N. Kuzmin,
Malcev algebras and their representations,
\emph{Algebra and Logic} \textbf{7} (1968), 233–244.

\bibitem{Sami}
S. Mabrouk,
Deformation of relative Rota-Baxter operators and Kupershmidt–Nijenhuis structures of a Malcev algebra,
\emph{Hacet. J. Math. Stat.} \textbf{51} (2021), 199–217.

\bibitem{Fatma}
S. Mabrouk, S. Silvestrov and F. Zouaidi,
Twisting \(\mathcal{O}\)-operators by \((2,3)\)-cocycles of Hom–Lie–Yamaguti algebras with representations,
\emph{J. Geom. Phys.} (2025), 105546.

\bibitem{PB}
J. Pei, C. Bai, L. Guo and X. Ni,
Replicators, Manin white product of binary operads and average operators,
in \emph{New Trends in Algebras and Combinatorics}, 2020, 317–353.

\bibitem{R}
O. Reynolds,
On the dynamical theory of incompressible viscous fluids,
\emph{Philos. Trans. Roy. Soc.} \textbf{186} (1895), 123–164.

\bibitem{R1}
G.-C. Rota,
On the representation of averaging operators,
\emph{Rend. Math. Univ. Padova} \textbf{30} (1960), 52–64.

\bibitem{R2}
G.-C. Rota,
Reynolds operators,
\emph{Proc. Sympos. Appl. Math.} \textbf{16} (1963), 70–83.

\bibitem{Rota}
G.-C. Rota,
Baxter algebras and combinatorial identities I, II,
\emph{Bull. Amer. Math. Soc.} \textbf{75} (1969), 325–329, 330–334.

\bibitem{MR2568415}
T. Schedler,
Poisson algebras and Yang–Baxter equations,
in \emph{Advances in quantum computation}, Contemp. Math. \textbf{482} (2009), 91–106.

\bibitem{S}
G. L. Seever,
Nonnegative projections on \(C_0(X)\),
\emph{Pacific J. Math.} \textbf{17} (1966), 159–166.

\bibitem{MR0725413}
M. A. Semenov-Tyan-Shanskii,
What a classical \(r\)-matrix is?,
\emph{Funktsional. Anal. i Prilozhen.} \textbf{17} no. 4 (1983), 17–33.

\bibitem{Shestakov}
I. P. Shestakov,
Speciality problem for Malcev algebras and Poisson Malcev algebras,
in \emph{Nonassociative Algebra and its Applications}, Lecture Notes Pure Appl. Math., Vol. 211, 2000, 365–371.

\end{thebibliography}
\end{document}